\theoremstyle{thmstyleone}%
\newtheorem{theorem}{Theorem}
\theoremstyle{thmstyletwo}%
\newtheorem{lemma}{Lemma}%
\theoremstyle{thmstylethree}%
\newtheorem{corollary}{Corollary}
\begin{document}


\title[Article Title]{A general alternating direction implicit iteration method for solving complex symmetric linear systems}


\author*[1]{\fnm{Juan} \sur{Zhang}}
\email{zhangjuan@xtu.edu.cn}
\author*[2]{\fnm{Wenlu} \sur{Xun}}

\affil*[1]{Key Laboratory of Intelligent Computing and Information Processing
of Ministry of Education, Hunan Key Laboratory for Computation and Simulation in Science and
Engineering, School of Mathematics and Computational Science, Xiangtan University, Xiangtan,
Hunan, China}

\affil*[2]{School of Mathematics and Computational Science, Xiangtan University, Xiangtan, Hunan,
China}

 
\abstract{We have introduced the generalized alternating direction implicit iteration (GADI) method for solving large sparse complex symmetric linear systems and proved its convergence properties. Additionally, some numerical results have demonstrated the effectiveness of this algorithm. Furthermore, as an application of the GADI method in solving complex symmetric linear systems, we utilized the flattening operator and Kronecker product properties to solve Lyapunov and Riccati equations with complex coefficients using the GADI method. In solving the Riccati equation, we combined inner and outer iterations, first simplifying the Riccati equation into a Lyapunov equation using the Newton method, and then applying the GADI method for solution. Finally, we provided convergence analysis of the method and corresponding numerical results.}

\keywords{GADI, flattening operator,  Kronecker product; Newton's method,  Lyapunov equation}



\maketitle

\section{Introduction}\label{sec1}

We consider the numerical solution of large sparse complex symmetric linear systems in the following form:
\begin{equation}
\label{eq1}
Ax=(W+iT)x=b,\ \ \ A\in\mathbb C^{n\times n},\ \ \ x,\ b\in\mathbb C^{n},
\end{equation}
where $i=\sqrt{-1}$ represents the imaginary part, and $W,T\in\mathbb R^{n\times n}$ are real symmetric matrices, with at least one of them is positive definite. Without loss of generality, in this paper, we assume that $W$ is symmetric and positive definite. This type of linear system commonly arises in various practical problems, particularly in scientific computing and engineering applications, such as diffuse optical tomography \cite{ref1}, structural dynamics \cite{ref2}, quantum mechanics \cite{ref3}, molecular scattering \cite{ref4}, algebraic eigenvalue problems [\cite{ref5}, and some time-dependent partial differential equation (PDE) solutions based on FFT \cite{ref6}.

In recent years, many scholars have developed iterative methods to effectively solve complex symmetric linear systems \eqref{eq1}. For example, Bai et al. \cite{ref7} introduced the use of Hermitian and skew-Hermitian splitting (HSS) methods to solve non-Hermitian positive definite linear systems. Subsequently, they applied the HSS method to preconditioned block linear systems and proposed the preconditioned HSS (PHSS) iterative method to improve the convergence speed of the HSS method \cite{ref8}. Furthermore, Bai and Golub \cite{ref8} proposed the accelerated HSS (AHSS) iterative method by utilizing parameter acceleration strategies, and numerical results indicated that the AHSS method outperformed the PHSS method. Subsequently, Bai et al. \cite{ref10} addressed the drawbacks of directly using the HSS method and proposed the modified HSS (MHSS) iterative method as follows:\\
\textbf{\ The MHSS iteration method}\quad Let $x^{(0)}\in\mathbb C^{n}$ be an initial guess and $\alpha$ be a given positive constant. For $k=0,1,2,\cdots$ until $\{x^{(k)}\}$ converges, compute $x^{(k+1)}$ according to the
following iteration scheme:
\begin{equation}\label{eq2}
\left\{
\begin{array}{ll}
(\alpha I+W)x^{(k+\frac{1}{2})}=(\alpha I-iT)x^{(k)}+b,\\
(\alpha I+T)x^{(k+1)}=(\alpha I+iW)x^{(k+\frac{1}{2})}-ib.
\end{array}
\right.
\end{equation}
In \cite{ref10}, it has shown that the MHSS iterative method converges for any positive constant $\alpha$ if $T$ is symmetric positive semi-definite. Clearly, the MHSS method only requires solving two linear subsystems with real symmetric positive definite coefficient matrices $\alpha I+W$ and $\alpha I+T$ at each step, making it more computationally efficient than the HSS method. Subsequently, Bai et al. proposed the preconditioned version of the MHSS method (PMHSS) for solving a class of complex symmetric linear systems in literature \cite{ref11}. The specific form of the PMHSS method is as follows:\\
\textbf{\ The PMHSS iteration method}\quad Let $x^{(0)}\in\mathbb C^{n}$ be an arbitrary initial value, $\alpha$ be a given positive constant, and $V\in\mathbb R^{n\times n}$ be a specified symmetric positive definite matrix. For $k=0,1,2,\cdots$ until $\{x^{(k)}\}$ converges, compute $x^{(k+1)}$ according to the
following sequence:
\begin{equation}\label{eq3}
\left\{
\begin{array}{ll}
(\alpha V+W)x^{(k+\frac{1}{2})}=(\alpha V-iT)x^{(k)}+b,\\
(\alpha V+T)x^{(k+1)}=(\alpha V+iW)x^{(k+\frac{1}{2})}-ib.
\end{array}
\right.
\end{equation}
In particular, when we choose $V=I$, the PMHSS iterative method evolves into the MHSS iterative method. Salkuyeh et al. \cite{ref12} proposed the Generalized Successive Over-Relaxation (GSOR) iterative method to solve this linear system. Additionally, Wang et al. \cite{ref13} introduced a combination method of real and imaginary parts (CRI), which is a special variant of the PMHSS method. Their theoretical analysis demonstrated that the spectral radius bound of the CRI iterative matrix is smaller than that of the PMHSS method, and the CRI method exhibits faster convergence than the PMHSS method. The specific form of the CRI method is as follows:\\
\textbf{\ The CRI iteration method}\quad Given an initial value $x^{(0)}\in\mathbb C^{n}$ and a positive constant $\alpha$. For $k=0,1,2,\cdots$ until $\{x^{(k)}\}$ converges, compute compute $x^{(k+1)}$ based
on the following update rule:
\begin{equation}\label{eq4}
\left\{
\begin{array}{ll}
(\alpha T+W)x^{(k+\frac{1}{2})}=(\alpha-i)Tx^{(k)}+b,\\
(\alpha W+T)x^{(k+1)}=(\alpha+i)Wx^{(k+\frac{1}{2})}-ib.
\end{array}
\right.
\end{equation}
Recently, Hezari et al. proposed the scale and split (SCSP) iterative method for solving equation \eqref{eq1} in \cite{ref14}, where they utilized the matrix splitting $$A=\frac{1}{\alpha-i}[(\alpha W+T)-i(W-\alpha T)].$$Furthermore, Salkuyeh in \cite{ref15} leveraged the idea of the SCSP iterative method and proposed the Two-Step Scale and Split (TSCSP) method for solving complex symmetric linear systems.\\
\textbf{\ The TSCSP iteration method}\quad Given an initial value $x^{(0)}\in\mathbb C^{n}$ and a positive constant $\alpha$. For $k=0,1,2,\cdots$ until $\{x^{(k)}\}$ converges, compute $x^{(k+1)}$ based
on the following update rule:
\begin{equation}\label{eq5}
\left\{
\begin{array}{ll}
(\alpha W+T)x^{(k+\frac{1}{2})}=i(W-\alpha T)x^{(k)}+(\alpha-i)b,\\
(\alpha T+W)x^{(k+1)}=i(\alpha W-T)x^{(k+\frac{1}{2})}+(1-i\alpha)b.
\end{array}
\right.
\end{equation}
The theoretical analysis in \cite{ref15} indicates that if the matrices $W$ and $T$ are symmetric positive definite, the TSCSP iterative method will unconditionally converge, and numerical results demonstrate that the TSCSP iterative method outperforms the GSOR and SCSP iterative methods. In addition, Salkuyeh and Siahkolaei derived the two-parameter TSCSP (TTSCSP) iterative method using a parameter strategy in \cite{ref17}. Recently, Cao and Wu proposed the minimal residual HSS (MRHSS) iterative method from a different perspective in \cite{ref18}, aiming to enhance the efficiency of the HSS iterative method by incorporating the technique of minimal residual in the HSS iteration.

In this paper, based on the GADI iterative method proposed in literature \cite{ref19}, we explore its application in solving large sparse real linear systems. We present the GADI iterative format for complex linear system \eqref{eq1} and analyze its convergence. Numerical results demonstrate that the GADI iterative method outperforms the MHSS, PMHSS, CRI, and TSCSP iterative methods. Furthermore, we apply this method to solve Lyapunov and Riccati equations with complex coefficients.

The structure of the remaining sections of this paper is as follows: In section \ref{sec:solve complex symmetric linear systems}, we present the GADI iterative format for solving complex symmetric linear systems \eqref{eq1} and investigate the convergence of this new method, along with providing corresponding numerical results. In section \ref{sec:solve Lyapunov equation}, we apply the GADI method to solve Lyapunov equations with complex coefficients. In section \ref{sec:solve Riccati equation}, we combine the Newton method and GADI method to solve Riccati equations with complex coefficients. Finally, in section \ref{sec:Conclusions}, we present some conclusions and remarks to conclude this paper.

In this paper, we will utilize the following notations: $\mathbb C^{n\times m}$ denotes the set of all $n\times m$ complex matrices, and $\mathbb R^{n\times m}$ denotes the set of all $n\times m$ real matrices. For a matrix $A$, we use $A^{T}$ and $A^{-1}$ to represent the transpose and inverse of $A$, and $\lambda(A)$ denotes the eigenvalues of $A$. Additionally, $\rho(A)$ represents the spectrum of $A$, and null$(A)$ denotes the null space of $A$. When $A$ and $B$ are symmetric, $A\succeq B$ indicates that $A-B$ is positive semi-definite, and $\|A\|_{2}$ denotes the 2-norm of $A$. Re$(A)$ and Im$(A)$ represent the real and imaginary parts of the eigenvalues of $A$, respectively, while $A\otimes I$ denotes the Kronecker product of $A$ and $I$.
\section{The GADI method for solving complex symmetric linear systems}
\label{sec:solve complex symmetric linear systems}
\subsection{GADI for solving complex symmetric linear systems}
\qquad Firstly,  the GADI method proposed in \cite{ref19} is used to solve the real linear equations $Ax=b$.  Its iterative format is as follows:
\begin{equation}\label{eq6}
\left\{
\begin{array}{ll}
(\alpha I+W)x^{(k+\frac{1}{2})}=(\alpha I-T)x^{(k)}+b,\\
(\alpha I+T)x^{(k+1)}=(T-(1-\omega)\alpha I)x^{(k)}+(2-\omega)\alpha x^{(k+\frac{1}{2})},
\end{array}
\right.
\end{equation}
where $A=W+T\in\mathbb R^{n\times n}$, and $\alpha>0,\ \ 0\leq\omega<2$.

We consider applying the GADI iterative method to solve the complex linear system \eqref{eq1}, assuming that $A$ is a non-singular complex symmetric matrix, where $A=W+iT, W\in\mathbb R^{n\times n}$ is a symmetric positive definite matrix and $T\in\mathbb R^{n\times n}$ is a symmetric positive semi-definite matrix. We add parameters to obtain$$\alpha x+Wx=\alpha x-iTx+b,$$
$$\alpha x+iTx=iTx-(1-\omega)\alpha x+(1-\omega)\alpha x+\alpha x=(iT-(1-\omega)\alpha I)x+(2-\omega)\alpha x,$$
where $\alpha>0,\ \ 0\leq\omega<2$.\ Taking the initial value as $x_{0}$=0, the GADI iterative format is as follows:
\begin{equation}\label{eq7}
\left\{
\begin{array}{ll}
(\alpha I+W)x^{(k+\frac{1}{2})}=(\alpha I-iT)x^{(k)}+b,\\
(\alpha I+iT)x^{(k+1)}=(iT-(1-\omega)\alpha I)x^{(k)}+(2-\omega)\alpha x^{(k+\frac{1}{2})}.
\end{array}
\right.
\end{equation}

Next, we present the algorithm flow as follows:
\floatname{algorithm}{Algorithm}
\renewcommand{\algorithmicrequire}{\textbf{Input:}}
\renewcommand{\algorithmicensure}{\textbf{Output:}}
\begin{algorithm}
  \caption{GADI iteration for solving complex symmetric linear systems \eqref{eq1}}
  \label{alg1}
  \begin{algorithmic}[1]
    \REQUIRE Matrix $W,T$, where $A=W+iT$, parameters $\alpha$ and $\omega$ are given positive constants, error limits $\varepsilon$, $\eta$, and $\tau$. $x^{(0)}\in\mathbb C^{n}$ can be any initial vector, and $b$ is the right-hand vector of \eqref{eq1};
    \ENSURE $x^{(k+1)}$ such that $x^{(k+1)}\approx x$, where $x$ is the solution to the complex linear system $Ax=b$.
    \STATE $x^{(0)}=0;$ calculate the residual vector $r^{(0)}=b-Ax^{(0)}$  and set $k=0$;
    \WHILE{$\frac{\|b-Ax^{(k)}\|}{\|b\|}=\frac{\|r^{(k)}\|}{\|b\|}\geq\varepsilon$}
    \STATE Calculate $r^{(k)}=b-Ax^{(k)}$, and let $\overline{r}^{(k)}=(\alpha I-iT)x^{(k)}+b$;
    \STATE Using the CG method to solve $$(\alpha I+W)x^{(k+\frac{1}{2})}=(\alpha I-iT)x^{(k)}+b,$$ where the approximate solution $x^{(k+\frac{1}{2})}$ satisfies $$\|\overline{r}^{(k)}-(\alpha I+W)x^{(k+\frac{1}{2})}\|\leq\eta\|\overline{r}^{(k)}\|,$$
    \STATE Calculate $r^{(k+\frac{1}{2})}=b-Ax^{(k+\frac{1}{2})}$, and let $$\overline{r}^{(k+\frac{1}{2})}=(iT-(1-\omega)\alpha I)x^{(k)}+(2-\omega)\alpha x^{(k+\frac{1}{2})},$$
    \STATE Using the CG method to solve $$(\alpha I+iT)x^{(k+1)}=(iT-(1-\omega)\alpha I)x^{(k)}+(2-\omega)\alpha x^{(k+\frac{1}{2})},$$ where the approximate solution $x^{(k+1)}$ satisfies  $$\|\overline{r}^{(k+\frac{1}{2})}-(\alpha I+iT)x^{(k+1)}\|\leq\tau\|\overline{r}^{(k+\frac{1}{2})}\|,$$
    \STATE Let $k=k+1;$
    \ENDWHILE
  \end{algorithmic}
\end{algorithm}\\
\subsection{Convergence analysis of GADI for solving complex symmetric linear systems}
\begin{lemma}\label{lemma1}\cite{ref16}
Let $W,\ T\in\mathbb R^{n\times n}$ be symmetric positive semi-definite matrices, and let $M=W+T$. Then, we have null$(W)\cap$null$(T)$=null$(M)$.
\end{lemma}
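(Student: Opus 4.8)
The plan is to establish the claimed set equality by proving the two inclusions separately; the only substantive ingredient is the elementary fact that a positive semi-definite quadratic form vanishes at a vector precisely when the matrix annihilates that vector. I would first dispatch the easy inclusion $\mathrm{null}(W)\cap\mathrm{null}(T)\subseteq\mathrm{null}(M)$: if $Wx=0$ and $Tx=0$, then $Mx=(W+T)x=Wx+Tx=0$, so $x\in\mathrm{null}(M)$.

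For the reverse inclusion $\mathrm{null}(M)\subseteq\mathrm{null}(W)\cap\mathrm{null}(T)$, I would take an arbitrary $x$ with $(W+T)x=0$ and multiply on the left by $x^{T}$ to obtain $x^{T}Wx+x^{T}Tx=0$. Since $W$ and $T$ are symmetric positive semi-definite, both $x^{T}Wx\ge 0$ and $x^{T}Tx\ge 0$, so a sum of two nonnegative reals being zero forces each to vanish; hence $x^{T}Wx=0$ and $x^{T}Tx=0$.

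The key step is then to pass from $x^{T}Wx=0$ to $Wx=0$. I would invoke the symmetric positive semi-definite square root $W^{1/2}$ (equivalently, any factorization $W=B^{T}B$), so that $x^{T}Wx=\|W^{1/2}x\|_{2}^{2}$; vanishing of this quantity yields $W^{1/2}x=0$, and therefore $Wx=W^{1/2}\bigl(W^{1/2}x\bigr)=0$. The identical argument applied to $T$ gives $Tx=0$, so $x\in\mathrm{null}(W)\cap\mathrm{null}(T)$, which completes the proof. I do not expect any genuine obstacle; the one point deserving a line of justification is the implication $x^{T}Wx=0\Rightarrow Wx=0$, which rests on the existence of the positive semi-definite square root (or a Cholesky-type factorization) of a positive semi-definite matrix.
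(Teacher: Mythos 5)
Your proof is correct. The paper itself states this lemma only with a citation to \cite{ref16} and gives no proof, and your argument (the trivial inclusion one way, and in the other direction using that $x^{T}Mx=0$ forces $x^{T}Wx=x^{T}Tx=0$, then passing from $x^{T}Wx=0$ to $Wx=0$ via the positive semi-definite square root) is exactly the standard proof that the cited source relies on, so there is nothing to add.
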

\begin{corollary}\label{corollary1}
Let $W,\ T\in\mathbb R^{n\times n}$ be symmetric positive semi-definite matrices. Let $A=W+iT$ and $M=W+T$. If $A$ is non-singular, then $M$ is a symmetric
positive definite matrix.
\end{corollary}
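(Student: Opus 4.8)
The plan is to observe that $M$ is trivially symmetric and positive semi-definite, and that the only real content is promoting semi-definiteness to definiteness, i.e.\ showing $\mathrm{null}(M)=\{0\}$. For this I would feed the non-singularity of $A$ into Lemma \ref{lemma1}.

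First I would record the easy part: since $W$ and $T$ are real symmetric, $M=W+T$ is real symmetric, and for every real vector $x$ one has $x^{T}Mx=x^{T}Wx+x^{T}Tx\ge 0$, so $M$ is positive semi-definite. It then remains only to check that $M$ has trivial kernel. Next I would take any $x\in\mathbb{C}^{n}$ with $Mx=0$ and split it as $x=u+iv$ with $u,v\in\mathbb{R}^{n}$. Because $M$ is real symmetric, the cross terms cancel and $0=x^{*}Mx=u^{T}Mu+v^{T}Mv$; both summands are nonnegative, hence $u^{T}Mu=v^{T}Mv=0$, and positive semi-definiteness of $M$ forces $Mu=Mv=0$. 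Thus $u,v\in\mathrm{null}(M)$ as a real subspace, and Lemma \ref{lemma1} gives $\mathrm{null}(M)=\mathrm{null}(W)\cap\mathrm{null}(T)$, so $Wu=Tu=0$ and $Wv=Tv=0$. Consequently $Au=(W+iT)u=0$ and $Av=0$, whence $Ax=Au+iAv=0$. Non-singularity of $A$ then yields $x=0$, so $\mathrm{null}(M)=\{0\}$, and a symmetric positive semi-definite matrix with trivial kernel is positive definite.

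The only point requiring a little care is the mismatch between the real framework of Lemma \ref{lemma1} and the complex non-singularity hypothesis on $A$; I expect this to be the main (mild) obstacle, and it is resolved exactly by the real/imaginary decomposition above, which shows that a complex null vector of $M$ has both its real and imaginary parts in the real common null space of $W$ and $T$. Everything else is bookkeeping, so the argument is short.
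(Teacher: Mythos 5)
Your proof is correct and follows essentially the same route as the paper: both reduce the statement to showing $\mathrm{null}(M)=\{0\}$ via Lemma \ref{lemma1} together with the non-singularity of $A$, the paper phrasing it as ``$A$ non-singular $\Rightarrow$ $\mathrm{null}(W)\cap\mathrm{null}(T)=\{0\}=\mathrm{null}(M)$'' and you arguing the contrapositive-style direction ($Mx=0\Rightarrow Ax=0\Rightarrow x=0$). Your real/imaginary decomposition just makes explicit (and is slightly more than needed, since positive definiteness of the real matrix $M$ only requires the real kernel) a step the paper asserts without detail.
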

\begin{proof}
Since $A=W+iT$ is non-singular if and only if null$(W)\cap$null$(T)=\{0\}$. Under the assumption of this corollary, by Lemma \ref{lemma1}, we have null$(W)\cap$null$(T)$=null$(M)$. Therefore, $A$ is non-singular if and only if null$(M)=\{0\}$. Since both $W$ and $T$ are positive semi-definite, $M$ is also positive semi-definite, so null$(M)=\{0\}$ implies that $M$ is positive definite.
\end{proof}
\begin{lemma}\label{lemma2}\cite{ref16}
Let $W,T\in\mathbb R^{n\times n}$ be symmetric positive sem-idefinite matrices that satisfy null$(W)\cap$ null$(T)=\{0\}$. Then there exists a non-singular matrix $P\in\mathbb R^{n\times n}$ such that
\begin{equation}\label{eq8}
W=P^{T}\Lambda P,\ \ \ T=P^{T}\widetilde{\Lambda}P.
\end{equation}
Here, $\Lambda=\text{diag}(\lambda_{1},\lambda_{2},\cdots,\lambda_{n}),\ \widetilde{\Lambda}=\text{diag}(\widetilde{\lambda}_{1},\widetilde{\lambda}_{2},\cdots,\widetilde{\lambda}_{n})$, and $\lambda_{i},\ \widetilde{\lambda}_{i}$ satisfy
\begin{equation}\label{eq9}
\lambda_{i}+\widetilde{\lambda}_{i}=1,\ \ \lambda_{i}\geq0,\ \ \widetilde{\lambda}_{i}\geq0,\ (i=1,2,\cdots,n).
\end{equation}
\end{lemma}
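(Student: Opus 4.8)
The plan is to prove this classical simultaneous-congruence statement by first transforming $M=W+T$ into the identity and then diagonalizing $W$ orthogonally. First I would note that the hypotheses are exactly those under which $M=W+T$ is positive definite: since $W,T\succeq 0$ with $\mathrm{null}(W)\cap\mathrm{null}(T)=\{0\}$, Lemma~\ref{lemma1} gives $\mathrm{null}(M)=\{0\}$, and a positive semi-definite matrix with trivial kernel is positive definite (this is precisely the reasoning in Corollary~\ref{corollary1}). Consequently $M$ admits a real symmetric positive definite square root $M^{1/2}$, which is invertible; put $S=M^{-1/2}$.

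Next I would study $\widehat W:=SWS$. It is real symmetric and positive semi-definite, being a congruence transform of $W\succeq0$; moreover, since $T\succeq0$ we have $W\preceq M$, hence $\widehat W=SWS\preceq SMS=I$. By the spectral theorem for real symmetric matrices there is a real orthogonal matrix $Q$ with $Q^{T}\widehat WQ=\Lambda:=\mathrm{diag}(\lambda_1,\dots,\lambda_n)$, and the bound $0\preceq\widehat W\preceq I$ forces $0\le\lambda_i\le1$ for every $i$.

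Finally, set $P:=Q^{T}M^{1/2}$, a product of invertible real matrices, so $P$ is nonsingular and $P^{T}=M^{1/2}Q$. A short computation gives $P^{T}\Lambda P=M^{1/2}Q\Lambda Q^{T}M^{1/2}=M^{1/2}\widehat WM^{1/2}=W$. Defining $\widetilde\Lambda:=I-\Lambda$, i.e.\ $\widetilde\lambda_i=1-\lambda_i$, we obtain immediately $\lambda_i+\widetilde\lambda_i=1$ and $\widetilde\lambda_i\ge0$ (from $\lambda_i\le1$), together with $P^{T}\widetilde\Lambda P=P^{T}P-P^{T}\Lambda P=M^{1/2}QQ^{T}M^{1/2}-W=M-W=T$, which is exactly the claimed factorization.

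The argument is essentially routine once $M$ is known to be positive definite, and I do not expect a serious obstacle. The two points that do require care are: (i) invoking Lemma~\ref{lemma1} to upgrade $M$ from semi-definite to definite, since it is this step that makes $M^{1/2}$ invertible and hence $P$ nonsingular; and (ii) keeping every object real — which is legitimate because $W$ and $T$ are real, so $M$, $M^{1/2}$, $Q$, and therefore $P$ are all real. The only mild subtlety is the sign constraint $\widetilde\lambda_i\ge0$, which follows either from the operator inequality $\widehat W\preceq I$ or, equivalently, from the fact that $P^{T}\widetilde\Lambda P=T\succeq0$ with $P$ nonsingular.
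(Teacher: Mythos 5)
Your proof is correct and complete. The paper itself offers no proof of this lemma --- it is simply quoted from \cite{ref16} --- so there is nothing to compare line by line, but your argument is the standard one for simultaneous diagonalization by congruence: use Lemma~\ref{lemma1} to upgrade $M=W+T$ to positive definite, orthogonally diagonalize $M^{-1/2}WM^{-1/2}$, and set $P=Q^{T}M^{1/2}$ so that $\widetilde\Lambda=I-\Lambda$ automatically delivers $T=P^{T}\widetilde\Lambda P$ with the constraint $\lambda_i+\widetilde\lambda_i=1$. All the steps check out, including the two points you flag as needing care (positive definiteness of $M$, and $0\le\lambda_i\le1$ from $0\preceq M^{-1/2}WM^{-1/2}\preceq I$).
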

\begin{theorem}\label{theorem1}
Let $W\in\mathbb R^{n\times n}$ be a symmetric positive definite matrix, $T\in\mathbb R^{n\times n}$ be a symmetric positive semi-definite matrix, $A=W+iT\in\mathbb C^{n\times n}$ be a non-singular matrix, and $\alpha$ be a given positive constant. Then the following HSS iteration method
\begin{equation}\label{eq10}
\left\{
\begin{array}{ll}
\alpha I+W)x^{(k+\frac{1}{2})}=(\alpha I-iT)x^{(k)}+b,\\
(\alpha I+iT)x^{(k+1)}=(\alpha I-W)x^{(k+\frac{1}{2})}+b,
\end{array}
\right.
\end{equation}
with iteration matrix $T(\alpha)$ having the spectral radius $\rho(T(\alpha))$, is bounded by the following inequality 
$$\sigma(\alpha)=\max_{\lambda_{i}\in sp(W)}\frac{|\alpha-\lambda_{i}|}{|\alpha+\lambda_{i}|},$$
where $T(\alpha)=(\alpha I+iT)^{-1}(\alpha I-W)(\alpha I+W)^{-1}(\alpha I-iT)$, and $sp(W)$ is the spectrum of the matrix $W$. Therefore, we have the following conclusion:
$$\rho(T(\alpha))\leq\sigma(\alpha)<1,\ \ \ \forall\alpha>0,$$
which implies that the HSS iteration \eqref{eq10} converges to the unique exact solution $x\in\mathbb C^{n}$ of the equation \eqref{eq1}.
\end{theorem}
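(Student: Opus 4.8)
The plan is to follow the classical HSS convergence argument, exploiting that $W$ and $T$ are real symmetric so that the two matrix factors hidden in $T(\alpha)$ can be estimated separately. First I would pass from $T(\alpha)$ to a similar matrix whose $2$-norm is easy to control. Since
$$T(\alpha)=(\alpha I+iT)^{-1}\bigl[(\alpha I-W)(\alpha I+W)^{-1}(\alpha I-iT)(\alpha I+iT)^{-1}\bigr](\alpha I+iT),$$
the matrix $T(\alpha)$ is similar (via the transformation $S=\alpha I+iT$) to
$$\widehat{T}(\alpha):=(\alpha I-W)(\alpha I+W)^{-1}\,(\alpha I-iT)(\alpha I+iT)^{-1},$$
so $\rho(T(\alpha))=\rho(\widehat{T}(\alpha))\le\|\widehat{T}(\alpha)\|_{2}\le\|(\alpha I-W)(\alpha I+W)^{-1}\|_{2}\,\|(\alpha I-iT)(\alpha I+iT)^{-1}\|_{2}$, using submultiplicativity of the spectral norm. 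It then suffices to bound the two factors.

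For the first factor, because $W$ is symmetric positive definite, $(\alpha I-W)(\alpha I+W)^{-1}$ is a symmetric matrix that is a rational function of $W$; diagonalizing $W$ shows its spectral norm equals $\max_{\lambda_{i}\in sp(W)}|\alpha-\lambda_{i}|/|\alpha+\lambda_{i}|=\sigma(\alpha)$, and since every $\lambda_{i}>0$ and $\alpha>0$ we have $|\alpha-\lambda_{i}|<\alpha+\lambda_{i}$, whence $\sigma(\alpha)<1$. For the second factor, $T$ being real symmetric we write $T=Q\,\text{diag}(\mu_{1},\dots,\mu_{n})\,Q^{T}$ with $Q$ orthogonal and $\mu_{j}\ge0$; then
$$(\alpha I-iT)(\alpha I+iT)^{-1}=Q\,\text{diag}\bigl(\tfrac{\alpha-i\mu_{j}}{\alpha+i\mu_{j}}\bigr)Q^{T}$$
is \emph{unitary}, since numerator and denominator of each diagonal entry have the same modulus $\sqrt{\alpha^{2}+\mu_{j}^{2}}$. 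Hence $\|(\alpha I-iT)(\alpha I+iT)^{-1}\|_{2}=1$, and multiplying the two estimates gives $\rho(T(\alpha))\le\sigma(\alpha)\cdot1=\sigma(\alpha)<1$ for every $\alpha>0$.

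Finally, eliminating $x^{(k+\frac12)}$ turns \eqref{eq10} into a stationary iteration $x^{(k+1)}=T(\alpha)x^{(k)}+c(\alpha)$; since $\rho(T(\alpha))<1$ it converges for any initial guess, $I-T(\alpha)$ is invertible, and the limit is the unique fixed point. A short computation — multiply the fixed-point identity successively by $(\alpha I+iT)$ and $(\alpha I+W)$, using that $(\alpha I\pm W)$ commute so that $(\alpha I+W)(\alpha I-W)(\alpha I+W)^{-1}=\alpha I-W$ — reduces it to $(W+iT)x=b$, i.e. the limit is the exact solution of \eqref{eq1}. I do not expect a genuine obstacle; the only points requiring care are recognizing that the $T$-factor is exactly unitary (so it contributes the factor $1$, which is precisely where the symmetry of $T$ is used) and keeping track of non-commutativity when verifying that the fixed point solves $Ax=b$.
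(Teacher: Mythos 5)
Your proof is correct, and although it opens exactly as the paper does---passing to the similar matrix $\widehat{T}(\alpha)=(\alpha I-W)(\alpha I+W)^{-1}(\alpha I-iT)(\alpha I+iT)^{-1}$ and bounding $\rho(T(\alpha))$ by the product of two spectral norms---it differs in how the two factors are then estimated. The paper invokes the simultaneous congruence of Lemma~\ref{lemma2} (a nonsingular $P$ with $W=P^{T}\Lambda P$, $T=P^{T}\widetilde{\Lambda}P$, $\lambda_{i}+\widetilde{\lambda}_{i}=1$) and reads the bound off the diagonal entries, whereas you evaluate each factor on its own: $(\alpha I-W)(\alpha I+W)^{-1}$ is symmetric, so its norm is $\max_{\lambda_{i}\in sp(W)}|\alpha-\lambda_{i}|/|\alpha+\lambda_{i}|=\sigma(\alpha)<1$ since $\lambda_{i}>0$, and $(\alpha I-iT)(\alpha I+iT)^{-1}$ is unitary (a Cayley-type transform of the real symmetric $T$), hence has norm exactly $1$. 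Your route is more elementary and actually cleaner with respect to the statement being proved: the paper's insertion of the non-orthogonal $P$ inside $2$-norms, and its identification of the congruence diagonal entries $\lambda_{i}$ with $sp(W)$, are delicate points (the $\lambda_{i}$ of Lemma~\ref{lemma2} are generalized, not ordinary, eigenvalues of $W$), while your argument yields the bound $\sigma(\alpha)$ directly in terms of the true spectrum of $W$, in the spirit of the classical MHSS estimate. You also add the fixed-point verification that the limit of the stationary iteration solves $(W+iT)x=b$, a step the paper leaves implicit; your commutativity bookkeeping there is sound.
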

\begin{proof}Since $T(\alpha)$ is similar to $\widehat{T}(\alpha)$, which can be expressed as
$$\widehat{T}(\alpha)=(\alpha I-W)(\alpha I+W)^{-1}(\alpha I-iT)(\alpha I+iT)^{-1},$$
we have
\begin{align*}
\rho(T(\alpha))&\leq\|(\alpha I-W)(\alpha I+W)^{-1}(\alpha I-iT)(\alpha I+iT)^{-1}\|_{2}\\
&\leq\|(\alpha I-W)(\alpha I+W)^{-1}\|_{2}\|(\alpha I-iT)(\alpha I+iT)^{-1}\|_{2}.
\end{align*}
Moreover, since $W\in\mathbb R^{n\times n}$ is symmetric positive definite and $T\in\mathbb R^{n\times n}$ is symmetric positive semi-definite, and null$(W)\cap$ null$(T)=\{0\}$, by Lemma \ref{lemma2}, there exists a non-singular matrix $P$ that satisfies \eqref{eq8} and \eqref{eq9}. Therefore, we have
\begin{align*}
\rho(T(\alpha))&\leq\|P^{T}(\alpha I-\Lambda)(\alpha I+\Lambda)^{-1}(\alpha I-i\widetilde{\Lambda})(\alpha I+i\widetilde{\Lambda})^{-1}P^{-T}\|_{2}\\
&\leq\|(\alpha I-\Lambda)(\alpha I+\Lambda)^{-1}\|_{2}\|(\alpha I-i\widetilde{\Lambda})(\alpha I+i\widetilde{\Lambda})^{-1}\|_{2}\\
&=\max_{\lambda_{i}\in sp(W)}\frac{|\alpha-\lambda_{i}|}{|\alpha+\lambda_{i}|}\max_{\widetilde{\lambda}_{i}\in sp(T)}\frac{|\alpha-i\widetilde{\lambda}_{i}|}{|\alpha+i\widetilde{\lambda}_{i}|}\\
&=\max_{\lambda_{i}\in sp(W)}\frac{|\alpha-\lambda_{i}|}{|\alpha+\lambda_{i}|}\max_{\widetilde{\lambda}_{i}\in sp(T)}\frac{\sqrt{\alpha^{2}+\widetilde{\lambda}^{2}_{i}}}{\sqrt{\alpha^{2}+\widetilde{\lambda}^{2}_{i}}}\\
&=\max_{\lambda_{i}\in sp(W)}\frac{|\alpha-\lambda_{i}|}{|\alpha+\lambda_{i}|},
\end{align*}
where $sp(W)$ and $sp(T)$ denote the spectra of matrice $W$ and $T$, respectively. Since $\lambda_{i}>0\ (i=1,2,\cdots,n)$ and $\alpha>0$, we have
$$\rho(T(\alpha))\leq\max_{\lambda_{i}\in sp(W)}\frac{|\alpha-\lambda_{i}|}{|\alpha+\lambda_{i}|}=\sigma(\alpha)<1.$$
\end{proof}
If the upper and lower bounds of the eigenvalues of the symmetric positive definite matrix $W$ are known, we can obtain the optimal parameter $\widetilde{\alpha}$ of $\sigma(\alpha)$ as stated in the following corollary.\\

\begin{corollary}\label{corollary2}
Assuming that all conditions in Theorem \ref{theorem1} are satisfied. Let $\gamma_{min}$ and $\gamma_{max}$ be the minimum and maximum eigenvalues of the symmetric positive definite matrix $W$, respectively. Then, we have $$\widetilde{\alpha}\equiv arg\min_{\alpha}\{\max_{\gamma_{min}\leq\lambda\leq\gamma_{max}}\frac{|\alpha-\lambda|}{|\alpha+\lambda|}\}=\sqrt{\gamma_{min} \gamma_{max}}\ ,$$ and $$\sigma(\widetilde{\alpha})=\frac{\sqrt{\gamma_{max}}-\sqrt{\gamma_{min}}}{\sqrt{\gamma_{max}}+\sqrt{\gamma_{min}}}=\frac{\sqrt{\kappa(W)}-1}{\sqrt{\kappa(W)}+1},$$ where $\kappa(W)$ is the condition number of matrix $W$.
\end{corollary}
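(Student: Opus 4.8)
The plan is to reduce the min--max problem to a single-variable optimization by first locating, for each fixed $\alpha>0$, the point at which the inner maximum over $\lambda$ is attained, and then minimizing the resulting function of $\alpha$ explicitly. First I would fix $\alpha>0$ and study $f_{\alpha}(\lambda)=\dfrac{|\alpha-\lambda|}{\alpha+\lambda}$ on $[\gamma_{min},\gamma_{max}]$. Writing $f_{\alpha}(\lambda)=\dfrac{\alpha-\lambda}{\alpha+\lambda}$ for $\lambda\le\alpha$ and $f_{\alpha}(\lambda)=\dfrac{\lambda-\alpha}{\lambda+\alpha}=1-\dfrac{2\alpha}{\lambda+\alpha}$ for $\lambda\ge\alpha$, a one-line differentiation shows $f_{\alpha}$ is nonincreasing on $(0,\alpha]$ and nondecreasing on $[\alpha,\infty)$. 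Consequently the maximum over the compact interval is always attained at an endpoint, so
$$\max_{\gamma_{min}\le\lambda\le\gamma_{max}}\frac{|\alpha-\lambda|}{\alpha+\lambda}=\max\left\{\frac{|\alpha-\gamma_{min}|}{\alpha+\gamma_{min}},\ \frac{|\alpha-\gamma_{max}|}{\alpha+\gamma_{max}}\right\}=:g(\alpha),$$
and it remains to minimize $g$ over $\alpha>0$.

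For the outer minimization I would observe that $\alpha\mapsto\dfrac{|\alpha-\gamma_{min}|}{\alpha+\gamma_{min}}$ decreases on $(0,\gamma_{min}]$ to $0$ and increases on $[\gamma_{min},\infty)$, while $\alpha\mapsto\dfrac{|\alpha-\gamma_{max}|}{\alpha+\gamma_{max}}$ decreases on $(0,\gamma_{max}]$ to $0$ and increases afterwards. On $[\gamma_{min},\gamma_{max}]$ the first is strictly increasing and the second strictly decreasing, so their pointwise maximum $g$ attains its minimum over this interval at the unique crossing point, determined by $\dfrac{\alpha-\gamma_{min}}{\alpha+\gamma_{min}}=\dfrac{\gamma_{max}-\alpha}{\gamma_{max}+\alpha}$. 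Cross-multiplying and simplifying collapses to $\alpha^{2}=\gamma_{min}\gamma_{max}$, hence $\widetilde{\alpha}=\sqrt{\gamma_{min}\gamma_{max}}$, which by the ordering $\gamma_{min}\le\gamma_{max}$ indeed lies in $[\gamma_{min},\gamma_{max}]$; outside that interval $g$ is only larger (both branches exceed their value at $\widetilde{\alpha}$ on $(0,\gamma_{min})$, and likewise on $(\gamma_{max},\infty)$), so $\widetilde{\alpha}$ is the global minimizer. Substituting $\widetilde{\alpha}$ into either branch and factoring out $\sqrt{\gamma_{min}}$ from numerator and denominator gives $\sigma(\widetilde{\alpha})=\dfrac{\sqrt{\gamma_{max}}-\sqrt{\gamma_{min}}}{\sqrt{\gamma_{max}}+\sqrt{\gamma_{min}}}$, and dividing through by $\sqrt{\gamma_{min}}$ once more turns this into $\dfrac{\sqrt{\kappa(W)}-1}{\sqrt{\kappa(W)}+1}$ since $\kappa(W)=\gamma_{max}/\gamma_{min}$.

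The only place that requires genuine care — the step I would treat as the crux — is the reduction of the inner maximum to the two endpoints: one must account for the kink of $|\alpha-\lambda|$ at $\lambda=\alpha$, verify the one-sided monotonicity of $f_{\alpha}$ on each piece, and separately dispatch the cases $\alpha<\gamma_{min}$ and $\alpha>\gamma_{max}$ (where the kink falls outside $[\gamma_{min},\gamma_{max}]$) so that the formula for $g(\alpha)$ is valid for all $\alpha>0$. Once this piecewise bookkeeping is settled, the remaining argument is an elementary single-variable optimization.
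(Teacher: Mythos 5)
Your proof is correct and follows essentially the same route as the paper: reduce the inner maximum over $\lambda$ to the two endpoint values and pick $\alpha$ at the crossing point $\frac{\alpha-\gamma_{min}}{\alpha+\gamma_{min}}=\frac{\gamma_{max}-\alpha}{\gamma_{max}+\alpha}$, yielding $\widetilde{\alpha}=\sqrt{\gamma_{min}\gamma_{max}}$ and then $\sigma(\widetilde{\alpha})$ by substitution. Your write-up is in fact more careful than the paper's, which asserts the endpoint reduction without the piecewise-monotonicity argument and writes the balancing equation with a sign slip (dropping the absolute value on the $\gamma_{max}$ branch), whereas you handle both points correctly.
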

\begin{proof}
The equation $$\sigma(\alpha)=\max\{\frac{|\alpha-\gamma_{min}|}{|\alpha+\gamma_{min}|},\ \frac{|\alpha-\gamma_{max}|}{|\alpha+\gamma_{max}|}\},$$
shows that to compute the optimal $\alpha$ such that the convergence factor $\rho(T(\alpha))$ of the HSS iteration is minimized, we can minimize the upper bound $\sigma(\alpha)$ of $\rho(T(\alpha))$. If $\widetilde{\alpha}$ is the parameter that minimizes $\sigma(\alpha)$, then it satisfies $\widetilde{\alpha}-\gamma_{min}>0,\ \widetilde{\alpha}-\gamma_{max}<0$, and
$$\frac{\alpha-\gamma_{min}}{\alpha+\gamma_{min}}= \frac{\alpha-\gamma_{max}}{\alpha+\gamma_{max}}.$$
Thus, we have $\widetilde{\alpha}=\sqrt{\gamma_{min}\gamma_{max}}$.

Obviously, $\widetilde{\alpha}$ is generally different from the optimal parameter $$\alpha_{\ast}=arg\min_{\alpha}\rho(T(\alpha)),$$ and it satisfies $$\rho(T(\alpha_{\ast}))\leq\sigma(\widetilde{\alpha}).$$\\
\end{proof}
\begin{theorem}\label{theorem2}
Let $W\in\mathbb R^{n\times n}$ be a symmetric positive definite matrix, $T\in\mathbb R^{n\times n}$ be a symmetric positive semi-definite matrix, and $A=W+iT\in\mathbb C^{n\times n}$ be a non-singular matrix. If the parameter $\alpha>0$ and $0\leq\omega<2$, then for any $k=0,1,2,\cdots$, the iteration sequence $\{x^{k}\}$ generated by the GADI iteration method defined by \eqref{eq7} converges to the unique exact solution $x\in\mathbb C^{n}$ of equation \eqref{eq1}.
\end{theorem}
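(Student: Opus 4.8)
The plan is to recognize that the GADI iteration \eqref{eq7} is nothing but a relaxation of the HSS iteration \eqref{eq10}, so that Theorem \ref{theorem1} can be applied verbatim. First I would eliminate the half-step: solving the first line of \eqref{eq7} for $x^{(k+\frac{1}{2})}$ and substituting into the second line gives $x^{(k+1)}=M(\alpha,\omega)x^{(k)}+g$ for a fixed vector $g$, with
$$M(\alpha,\omega)=(\alpha I+iT)^{-1}\bigl[(iT-(1-\omega)\alpha I)+(2-\omega)\alpha(\alpha I+W)^{-1}(\alpha I-iT)\bigr].$$
Then I would simplify using the splitting identity $(\alpha I+W)-(\alpha I-iT)=W+iT=A$, i.e.\ $(\alpha I+W)^{-1}(\alpha I-iT)=I-(\alpha I+W)^{-1}A$. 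Substituting this and collecting the $\alpha I$ terms (the scalar coefficient $-(1-\omega)+(2-\omega)$ equals $1$) collapses the bracket to $(\alpha I+iT)-(2-\omega)\alpha(\alpha I+W)^{-1}A$, so that
$$M(\alpha,\omega)=I-(2-\omega)\alpha\,(\alpha I+iT)^{-1}(\alpha I+W)^{-1}A.$$

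The decisive point is that $\omega=0$ reproduces HSS: using $\alpha I-W=2\alpha I-(\alpha I+W)$ one checks that $M(\alpha,0)=I-2\alpha(\alpha I+iT)^{-1}(\alpha I+W)^{-1}A$ is exactly the HSS iteration matrix $T(\alpha)$ of Theorem \ref{theorem1}. Comparing this with the previous display yields $(\alpha I+iT)^{-1}(\alpha I+W)^{-1}A=\frac{1}{2\alpha}\bigl(I-T(\alpha)\bigr)$, hence the affine identity
$$M(\alpha,\omega)=\frac{\omega}{2}\,I+\Bigl(1-\frac{\omega}{2}\Bigr)T(\alpha).$$
Since $M(\alpha,\omega)$ is a first-degree polynomial in $T(\alpha)$, its eigenvalues are $\frac{\omega}{2}+(1-\frac{\omega}{2})\mu$ as $\mu$ runs over the eigenvalues of $T(\alpha)$. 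By Theorem \ref{theorem1}, $|\mu|\le\rho(T(\alpha))\le\sigma(\alpha)<1$; and since $0\le\frac{\omega}{2}<1$ and $0<1-\frac{\omega}{2}\le1$, the triangle inequality gives $\bigl|\tfrac{\omega}{2}+(1-\tfrac{\omega}{2})\mu\bigr|\le\tfrac{\omega}{2}+(1-\tfrac{\omega}{2})|\mu|<1$. Thus $\rho(M(\alpha,\omega))<1$ for every $\alpha>0$ and $0\le\omega<2$. It remains only to note consistency: any fixed point $x$ of \eqref{eq7} satisfies $(\alpha I+W)x=(\alpha I-iT)x+b$, i.e.\ $Ax=b$, the unique solution of \eqref{eq1} since $A$ is non-singular; combined with $\rho(M(\alpha,\omega))<1$ this shows $x^{(k)}\to x$ from any starting vector.

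I expect the only real work to be the algebraic reduction producing the identity $M(\alpha,\omega)=\frac{\omega}{2}I+(1-\frac{\omega}{2})T(\alpha)$; conceptually this merely says that GADI is an under/over-relaxed HSS, after which convergence is immediate. The two places to be careful are that the eigenvalues $\mu$ of $T(\alpha)$ are generally complex, so the estimate must go through the triangle inequality rather than a monotonicity argument on a real variable, and that the strict inequality survives only because $1-\frac{\omega}{2}>0$ --- precisely the hypothesis $\omega<2$, whereas $\omega=2$ would give $M(\alpha,2)=I$ and a stationary, non-convergent iteration.
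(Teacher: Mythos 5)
Your proposal is correct and follows essentially the same route as the paper: both reduce the GADI iteration matrix to the affine identity $M(\alpha,\omega)=\frac{\omega}{2}I+\bigl(1-\frac{\omega}{2}\bigr)T(\alpha)$ and then invoke Theorem \ref{theorem1}'s bound $\rho(T(\alpha))<1$ together with $0\le\omega<2$ to get $\rho(M(\alpha,\omega))<1$. The only differences are cosmetic — you reach the identity via $M=I-(2-\omega)\alpha(\alpha I+iT)^{-1}(\alpha I+W)^{-1}A$ and the $\omega=0$ comparison rather than the paper's direct regrouping, and you are somewhat more explicit than the paper about the triangle inequality for complex eigenvalues and about fixed-point consistency.
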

\begin{proof}
From \eqref{eq7}, we can obtain $$x^{(k+\frac{1}{2})}=(\alpha I+W)^{-1}[(\alpha I-iT)x^{(k)}+b],$$
\begin{align*}
x^{(k+1)}&=(\alpha I+iT)^{-1}[(iT-(1-\omega)\alpha I)x^{(k)}+(2-\omega)\alpha x^{(k+\frac{1}{2})}]\\
&=(\alpha I+iT)^{-1}[(iT-(1-\omega)\alpha I)+(2-\omega)\alpha(\alpha I+W)^{-1}(\alpha I-iT)]x^{(k)}\\
&+(2-\omega)\alpha(\alpha I+iT)^{-1}(\alpha I+W)^{-1}b\\
&=(\alpha I+iT)^{-1}(\alpha I+W)^{-1}[\alpha^{2}I+iWT-(1-\omega)\alpha A]x^{(k)}\\
&+(2-\omega)\alpha(\alpha I+iT)^{-1}(\alpha I+W)^{-1}b.\\
\end{align*}
Simplifying the above equations into matrix form, we have $$x^{(k+1)}=M(\alpha,\omega)x^{(k)}+N(\alpha,\omega),$$ where$$M(\alpha,\omega)=(\alpha I+iT)^{-1}(\alpha I+W)^{-1}[\alpha^{2}I+iWT-(1-\omega)\alpha A],$$
$$N(\alpha,\omega)=(2-\omega)\alpha(\alpha I+iT)^{-1}(\alpha I+W)^{-1}b.$$
Furthermore, we have
\begin{align*}
2M(\alpha,\omega)&=2(\alpha I+iT)^{-1}(\alpha I+W)^{-1}[\alpha^{2}I+iWT-(1-\omega)\alpha A]\\
&=(2-\omega)(\alpha I+iT)^{-1}(\alpha I+W)^{-1}(\alpha I-W)(\alpha I-iT)\\
&+\omega(\alpha I+iT)^{-1}(\alpha I+W)^{-1}(\alpha I+W)(\alpha I+iT)\\
&=(2-\omega)T(\alpha)+\omega I,
\end{align*}
where $(\alpha I+W)^{-1}(\alpha I-W)=(\alpha I-W)(\alpha I+W)^{-1}$, and $T(\alpha)$ is the iterative matrix of the HSS method in equation \eqref{eq10}. Therefore, we have $$M(\alpha,\omega)=\frac{1}{2}[(2-\omega)T(\alpha)+\omega I].$$
Furthermore, we have $$\lambda_{i}(M(\alpha,\omega))=\frac{1}{2}[(2-\omega)\lambda_{i}(T(\alpha))+\omega]\ (i=1,2,\cdots n),$$
and $$\rho(M(\alpha,\omega))\leq\frac{1}{2}[(2-\omega)\rho(T(\alpha))+\omega].$$
According to the conclusion in Theorem \ref{theorem1} that $\rho(T(\alpha))<1$ and $0\leq\omega<2$, thus we can get
$$\rho(M(\alpha,\omega))\leq\frac{1}{2}[(2-\omega)\rho(T(\alpha))+\omega]<1.$$
Therefore, we can conclude that the iterative formula \eqref{eq7} is convergent.
\end{proof}
\subsection{Complexity analysis}
\qquad When considering the iterative format in equation \eqref{eq7} and the complexity of Algorithm \ref{alg1}, it is important to note that at each step of GADI iteration, two linear subsystems need to be solved, with coefficient matrices $\alpha I+W$ and $\alpha I+iT$, both of which are symmetric positive definite matrices. The matrix $\alpha I+W$ can be efficiently handled using real arithmetic, allowing for effective calculation of the first half step of the iteration $x^{(k+\frac{1}{2})}$ through Cholesky decomposition or inexact conjugate gradient method. However, the matrix $\alpha I+iT$ while also symmetric positive definite, is complex and non-Hermitian, requiring a more complex algorithm for accurate calculation in the second half step of the iteration $x^{(k+1)}$. A potential difficulty of GADI iteration \eqref{eq7} lies in the fact that even though there are ways to construct matrix $T$ to make the linear system involving $\alpha I+iT$ easy to solve, the shifted subsystems of linear equations still need to be solved at each iteration step. At present, there is also evidence suggesting that the accuracy of iteration \eqref{eq7} may decrease during internal solving, which could yield significant cost savings, particularly when taking advantage of the sparsity of the coefficient matrix of the linear system. We define the residual norm as follows: equation$$\|r^{(k)}\|_{2}=\|b-Ax^{(k)}\|_{2}.$$

In practical computations, iterative schemes typically start from the zero vector, and due to the presence of errors, the computational time required for iteration may become unacceptable when $n$ is large. Therefore, it is necessary to set a stopping criterion, such as terminating when the current iteration satisfies ERR $\leq10^{-6}$, where $$\text{RES}=\frac{\|b-Ax^{(k)}\|_{2}}{\|b\|_{2}}.$$
In Algorithm \ref{alg1}, the major computations required for each iteration include matrix-vector multiplication, inner product of vectors, and linear combinations of vectors, resulting in a computational complexity of $O(n^{2})$ for solving these two linear systems.
\subsection{Numerical experiments}
\qquad\textbf{Example 2.4.1}\cite{ref10}\ \ First, we consider the linear system \eqref{eq1} in the following form:
\begin{equation}\label{eq11}
[(K+\frac{3-\sqrt{3}}{\tau}I)+i(K+\frac{3+\sqrt{3}}{\tau}I)]x=b,
\end{equation}
where $\tau$ is the time step, $h=\frac{1}{m+1}$ is the grid size, $K$ is the five-point central difference matrix, and matrix $K\in\mathbb R^{n\times n}$ has a tensor product form $K=I\otimes V_{m}+V_{m}\otimes I$, with$$
V_{m}=\frac{1}{h^{2}}\left(
\begin{array}{ccccc}
    2   &  -1   &  0   &  \cdots   &  0\\
     -1 &  2   &  -1  &   \cdots   &  0\\
     \vdots   &  \ddots  &  \ddots  &   \ddots   &  \vdots\\
     0   &  \cdots   &  -1  &   2   &  -1\\
     0   & \cdots   &  0  &   -1  &   2\\
  \end{array}
\right)_{m\times m},$$
where $K$ is a $n\times n$ block diagonal matrix, with $n=m^{2}$. We take $$W=K+\frac{3-\sqrt{3}}{\tau}I,\ \ \ T=K+\frac{3+\sqrt{3}}{\tau}I,$$
and the $j$-th component of the right-hand vector $b$ is given by $$b_{j}=\frac{(1-i)j}{\tau(j+1)^{2}},\ \ j=1,2,\cdots,n.$$

In the algorithm implementation for solving this example, we consider $\tau=h$ and $\tau=500h$, with the initial value chosen as $x^{(0)}=0$, and terminate the iteration once the current iterate $x^{(k)}$ satisfies the condition
$$\frac{\|b-Ax^{(k)}\|_{2}}{\|b\|_{2}}\leq10^{-6}.$$

Firstly, with the same value of $\tau$, we compare the GADI method with previously proposed MHSS, PMHSS, CRI and TSCSP methods, as shown in Table \ref{tab1}. From this table, we can see that not only in terms of iteration steps (IT), but also in terms of iteration time (CPU), the GADI method is superior to the MHSS, PMHSS, CRI and TSCSP methods. Secondly, for different values of $\tau$, it can be observed that when $\tau$ is relatively small compared to $h$, the number of iteration steps is relatively small. In addition, as shown in Fig. \ref{fig1}, for $m=32(n=1024)$ and $\tau=h$, the residual of these methods decreases with the iteration steps, in which the residual of the GADI method decreases the fastest, indicating that it has the fastest convergence rate.
\begin{table}[!htbp]
\caption{Numerical results for Example 2.4.1\label{tab1}}
\begin{tabular}{llllll}
   \hline
    algorithm &$m^{2}$ & time step~($\tau$) & RES &IT &CPU  \\
   \hline
    MHSS&$8^{2}$(64)& $\tau=h$ & 7.8469e-6 & 31 & 0.059s  \\

    PMHSS& $8^{2}$(64)& $\tau=h$ & 8.76e-6 & 17 & 0.024s  \\

   CRI & $8^{2}$(64)& $\tau=h$ & 9.0879e-6& 17& 0.022s \\

   TSCSP & $8^{2}$(64)& $\tau=h$ & 7.5704e-6 &  11&  0.015s \\

    GADI&  $8^{2}$(64)& $\tau=h$ &7.6464e-6 & 5 &  0.0079s\\

    GADI& $8^{2}$(64)& $\tau=500h$ &4.0316e-6& 6& 0.024s\\
     \hline
    MHSS&$16^{2}$(256)& $\tau=h$ & 9.7199e-6& 30 & 0.5602s  \\

    PMHSS& $16^{2}$(256)& $\tau=h$ & 6.5147e-6 & 18 & 0.321s  \\

   CRI & $16^{2}$(256)& $\tau=h$ & 6.5564e-6& 18& 0.301s \\

   TSCSP & $16^{2}$(256)& $\tau=h$ & 6.4014e-6 &  12&  0.231s \\

    GADI&  $16^{2}$(256)& $\tau=h$ &2.2772e-6 & 6 &  0.127s\\

    GADI& $16^{2}$(256)& $\tau=500h$ &6.718e-6& 7& 0.4101s\\
   \hline
    MHSS&$24^{2}$(576)& $\tau=h$ &8.1036e-6 & 32 & 2.693s  \\

    PMHSS& $24^{2}$(576)& $\tau=h$ & 6.2366e-6 & 20 & 1.6725s  \\

   CRI & $24^{2}$(576)& $\tau=h$ & 6.0047e-6& 19& 1.576s\\

   TSCSP & $24^{2}$(576)& $\tau=h$ & 7.0699e-6 &  13& 1.1307s \\

    GADI&  $24^{2}$(576)& $\tau=h$ &9.5479e-6 & 6 &  0.675s\\

    GADI& $24^{2}$(576)& $\tau=500h$ &3.1447e-6& 7& 2.1395s\\
     \hline
    MHSS&$32^{2}$(1024)& $\tau=h$ & 7.8991e-6 & 33 & 12.212s  \\

    PMHSS& $32^{2}$(1024)& $\tau=h$ &7.9169e-6 & 22 & 8.2102s  \\

   CRI & $32^{2}$(1024)& $\tau=h$ & 7.8213e-6& 20& 7.6119s \\

   TSCSP & $32^{2}$(1024)& $\tau=h$ & 4.7614e-6 &  14&  5.499s \\

    GADI&  $32^{2}$(1024)& $\tau=h$ &5.7213e-6 & 5 &  2.4403s\\

    GADI& $32^{2}$(1024)& $\tau=500h$ &1.5449e-6& 5& 7.5684s\\
     \hline
    MHSS&$48^{2}$(2304)& $\tau=h$ & 9.4894e-6 & 44 & 258.74s  \\

    PMHSS& $48^{2}$(2304)& $\tau=h$ & 8.4842e-6 & 31 & 172.63s  \\

   CRI & $48^{2}$(2304)& $\tau=h$ & 9.7241e-6& 22& 112.52s \\

   TSCSP & $48^{2}$(2304)& $\tau=h$ & 7.0552e-6 &  14&  68.378s \\

    GADI&  $48^{2}$(2304)& $\tau=h$ &7.5303e-6 & 7 &  36.525s\\

    GADI& $48^{2}$(2304)& $\tau=500h$ &4.4872e-6& 7& 101.35s\\
   \hline
 \end{tabular}
 \centering
 \end{table}

\begin{figure}[!htbp]

  \center
  \includegraphics[width=12.5cm,height=7cm] {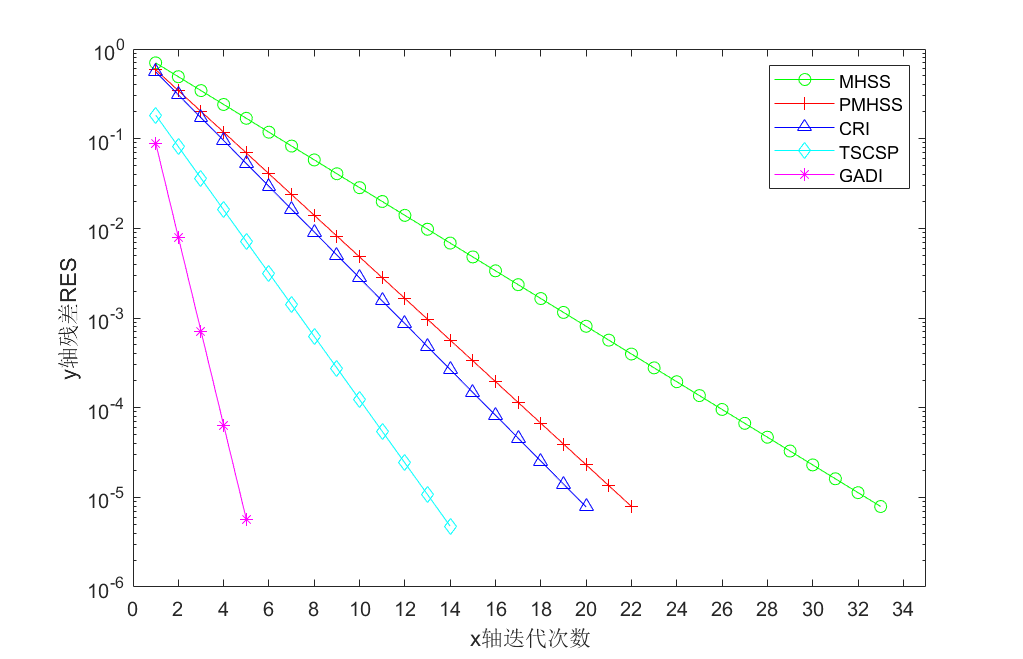}\\
  \caption{\label{1}$n=1024$\label{fig1}}
\end{figure}

In selecting the optimal parameters, we employed an experimental approach. In the iterative process of the GADI method, we first fixed the value of parameter $\omega$ and then explored the relationship between the number of iterations and the parameter $\alpha$, Finally, we found a relatively good parameter $\alpha_{opt}$.\\

\textbf{Example 2.4.2}\cite{ref12}\ \ We consider the complex Helmholtz equation $$-\triangle u+\sigma_{1}u+i\sigma_{2}u=f,$$
where $\sigma_{1}$ and $\sigma_{2}$ is a real coefficient function, and $u$ satisfies Dirichlet boundary conditions in the domain $D=[0,1]\times[0,1]$. We used finite differences on an $m\times m$ grid with grid size $h=\frac{1}{m+1}$ to discretize the problem. This leads to a linear system of equations
$$((K+\sigma_{1}I)+i\sigma_{2}I)x=b,$$
where $K=I\otimes V_{m}+V_{m}\otimes I$ is a discrete difference centered around $-\triangle$, $V_{m}=h^{-2} \text{tridiag}(-1,2,-1)\in\mathbb R^{m\times m}$. The right-hand vector $b=(1+i)A\bm{1}$, where$\bm{1}$ is a vector with all elements equal to 11. Additionally, before solving the system, we normalized the coefficient matrix and multiplied the right-hand side vector by $h^{2}$. For the numerical tests, we set $\sigma_{1}=\sigma_{2}=100$.

The table below presents the iteration counts (IT), iteration time (CPU), and residual norms (RES) for the MHSS, PMHSS, CRI, TSCSP, and GADI methods in Example 2.4.2. From Table \ref{tab2}, it is evident that the GADI method clearly outperforms the MHSS, PMHSS, CRI, and TSCSP methods. In Fig. \ref{fig2}, we can observe the variation of the residual for these methods with iteration steps, specifically for $m=32(n=1024)$. Among them, the GADI method exhibits the fastest decrease in residual, further demonstrating its superior convergence rate.
\begin{table}[!htbp]
\caption{Numerical results for Example 2.4.2\label{tab2}}
\begin{tabular}{lllll}
   \hline
   algorithm &$m^{2}$ & RES & IT &CPU\\
 \hline
   MHSS& $8^{2}$(64) & 8.5494e-6 & 23 & 0.052s  \\

    PMHSS& $8^{2}$(64) &9.6066e-6 & 18 & 0.045s   \\

    CRI & $8^{2}$(64) & 8.0214e-6 & 19& 0.0586s \\

   TSCSP&$8^{2}$(64)& 2.7332e-6 & 5 & 0.0226s  \\

    GADI& $8^{2}$(64)& 7.6366e-6 & 4 & 0.0208s   \\
    \hline
   MHSS& $16^{2}$(256) & 9.7665e-6 & 28 &0.7329s  \\

    PMHSS& $16^{2}$(256) &9.579e-6 & 21 & 0.649s   \\

    CRI & $16^{2}$(256) & 5.7801e-6& 16& 0.608s \\

   TSCSP&$16^{2}$(256)& 9.6159e-6 & 6 & 0.3527s  \\

    GADI& $16^{2}$(256)& 4.2332e-7 & 4 & 0.162s   \\
    \hline
   MHSS& $24^{2}$(576) & 8.4213e-6& 32& 4.099s  \\

    PMHSS& $24^{2}$(576) &8.8075e-6 & 22 & 3.0974s   \\

    CRI & $24^{2}$(576) & 5.5243e-6 & 17&3.005s \\

   TSCSP&$24^{2}$(576)& 7.1973e-6 & 6 & 1.7102s \\

    GADI& $24^{2}$(576)&1.5259e-6& 4 &  0.93s  \\
   \hline
   MHSS& $32^{2}$(1024) & 8.5731e-6 & 37 & 18.44s  \\

    PMHSS& $32^{2}$(1024) &6.5034e-6 & 26 &14.765s   \\

    CRI & $32^{2}$(1024) & 6.3379e-6 & 16& 12.015s \\

   TSCSP&$32^{2}$(1024)& 1.747e-6 & 6 &  7.0458s \\

    GADI& $32^{2}$(1024)& 1.7169e-6& 4 & 4.512s   \\
    \hline
   MHSS& $48^{2}$(2304) & 8.6389e-6 & 44 & 357.68s  \\

    PMHSS& $48^{2}$(2304) &9.1479e-6 & 32 & 200.31s   \\

    CRI & $48^{2}$(2304) & 9.8633e-6 & 17& 152.35s \\

   TSCSP&$48^{2}$(2304)& 1.6096e-6 & 6 & 66.866s \\

    GADI& $48^{2}$(2304)& 9.9397e-7 & 5 &  57.7s  \\
   \hline
 \end{tabular}
 \centering
 \end{table}
\begin{figure}[H]

  \center
  \includegraphics[width=12cm,height=7cm] {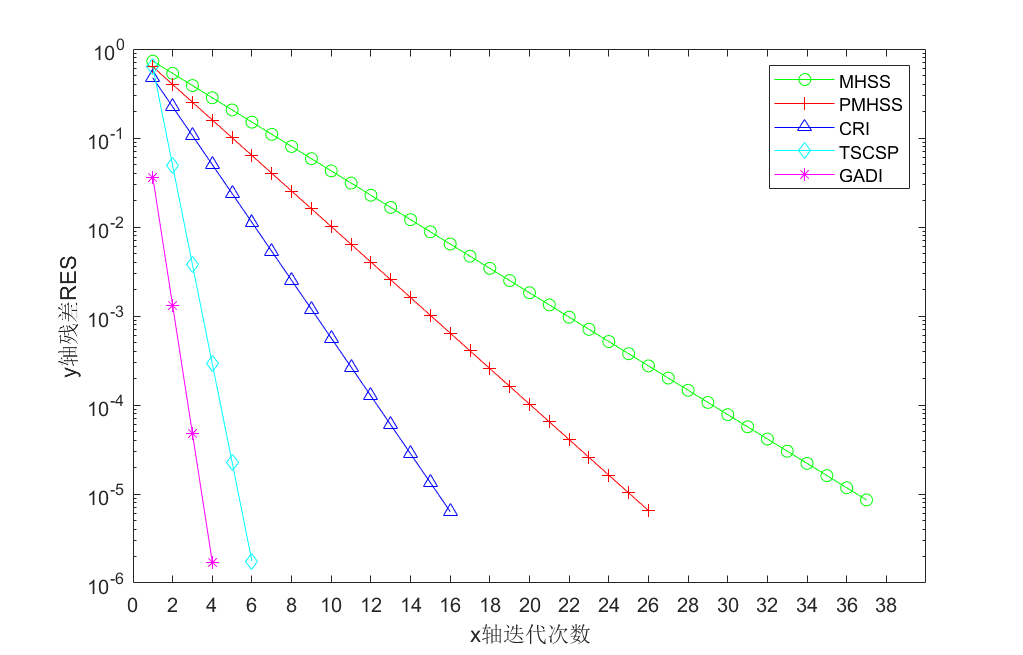}\\
  \caption{\label{1}$n=1024$ \label{fig2}}
\end{figure}
 
\section{GADI for solving Lyapunov equations with complex coefficients}
\label{sec:solve Lyapunov equation}
\qquad As an application of the GADI method for solving complex symmetric linear systems, we consider the Lyapunov matrix equation
\begin{equation}\label{eq12}
A^{\ast}X+XA=Q,\ \ \ A,\ Q,\ X\in\mathbb C^{n\times n},
\end{equation}
where $A, Q$ are known matrices, and $X$ is unknown matrices. We assume that matrix $A$ is sparse and has eigenvalues with positive real parts, i.e., $Re(\lambda_{i}(A))>0\ \ \forall i$. Additionally, $A$ can be decomposed as $A=W+iT$, where $W\in\mathbb R^{n\times n}$ is a symmetric positive-definite matrix and $T\in\mathbb R^{n\times n}$ is a symmetric semi positive-definite matrix. The matrix $Q$ is also symmetric positive definite, where $Q=C^{T}C$ and $C\in\mathbb C^{p\times n}$, with rank$(C)$=$p\ll n$.

Using the Kronecker product, we can transform Equation \eqref{eq12} into a linear system defined on an $n^{2}$ dimensional vector space. Here, we introduce vectorization operator vec: $\mathbb C^{m\times n}\rightarrow\mathbb C^{mn}$:
$$\text{vec}(X_{1})=(x_{1}^{T},x_{2}^{T},\cdots,x_{n}^{T})^{T},\ \ X_{1}=[x_{1},x_{2},\cdots,x_{n}]\in\mathbb C^{m\times n}.$$
For any $X_{1}\in\mathbb C^{m\times n},\ x=\text{vec}(X_{1})$ is a vector in $\mathbb C^{mn}$, so we refer to this operator as the vectorization operator.

\begin{lemma}\label{lemma3}\cite{ref20}
Let $A_{1}\in\mathbb C^{l\times m},\ X_{1}\in\mathbb C^{m\times n},\ B_{1}\in\mathbb C^{n\times k}$, then $$\text{vec}(A_{1}X_{1}B_{1})=(B_{1}^{T}\otimes A_{1})\text{vec}(X_{1}).$$
\end{lemma}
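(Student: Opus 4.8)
The plan is to reduce the identity to three elementary facts about the vectorization operator and the Kronecker product, and then assemble them via a rank-one column decomposition of $X_{1}$. This is the standard textbook argument, so the proof will be short; the only care needed is with the column-stacking convention already fixed in the definition of $\text{vec}$ above.

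First I would record the building blocks. (i) For column vectors $a\in\mathbb C^{l}$ and $b\in\mathbb C^{n}$, the outer product $ab^{T}$ is the matrix whose $j$-th column is $b_{j}a$; stacking columns in the paper's convention gives $\text{vec}(ab^{T})=b\otimes a$. (ii) The mixed-product property $(P\otimes Q)(R\otimes S)=(PR)\otimes(QS)$, valid whenever $PR$ and $QS$ are defined. (iii) Letting $e_{1},\dots,e_{n}$ be the standard basis of $\mathbb C^{n}$ and $x_{1},\dots,x_{n}$ the columns of $X_{1}$, one has $X_{1}=\sum_{k=1}^{n}x_{k}e_{k}^{T}$ and hence, by linearity of $\text{vec}$ together with (i), $\text{vec}(X_{1})=\sum_{k=1}^{n}e_{k}\otimes x_{k}$.

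Then I would compute directly. Writing $A_{1}X_{1}B_{1}=\sum_{k=1}^{n}(A_{1}x_{k})(e_{k}^{T}B_{1})=\sum_{k=1}^{n}(A_{1}x_{k})(B_{1}^{T}e_{k})^{T}$, I apply (i) to each summand to obtain $\text{vec}(A_{1}X_{1}B_{1})=\sum_{k=1}^{n}(B_{1}^{T}e_{k})\otimes(A_{1}x_{k})$. By the mixed-product property (ii), each term equals $(B_{1}^{T}\otimes A_{1})(e_{k}\otimes x_{k})$; summing over $k$ and invoking the expression for $\text{vec}(X_{1})$ from (iii) gives $(B_{1}^{T}\otimes A_{1})\text{vec}(X_{1})$, which is the asserted identity.

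I do not expect any real obstacle: the argument is a routine but notation-sensitive verification, and the only place to be careful is the bookkeeping in the column/block structure — in particular confirming $\text{vec}(ab^{T})=b\otimes a$ under the paper's convention and matching the $(j,k)$ block $(B_{1}^{T})_{jk}A_{1}$ of $B_{1}^{T}\otimes A_{1}$ against the $j$-th block $\sum_{k}(B_{1})_{kj}A_{1}x_{k}$ of $A_{1}X_{1}B_{1}$. As a cross-check I would note the equivalent entrywise route: comparing the $j$-th block of both sides using $(A_{1}X_{1}B_{1})e_{j}=\sum_{k}(B_{1})_{kj}A_{1}x_{k}$ reproduces the same equality without explicitly invoking the mixed-product property.
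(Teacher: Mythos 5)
Your proof is correct. The paper does not prove this lemma at all --- it is quoted without proof from the reference \cite{ref20} --- so there is no internal argument to compare against; your route (the rank-one decomposition $X_{1}=\sum_{k}x_{k}e_{k}^{T}$, the identity $\text{vec}(ab^{T})=b\otimes a$, and the mixed-product property) is the standard derivation and is consistent with the column-stacking convention for $\text{vec}$ fixed in the paper. One cosmetic point: your summation index $k$ collides with the column dimension $k$ of $B_{1}\in\mathbb C^{n\times k}$ in the statement, so rename one of them when writing the argument out.
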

\begin{lemma}\label{lemma4}\cite{ref20}
Let $A_{1}\in\mathbb C^{m\times m}$ have eigenvalues $\lambda_{1},\cdots,\lambda_{m}$, and let  $B_{1}\in\mathbb C^{n\times n}$ have eigenvalues $\mu_{1},\cdots,\mu_{n}$. Then $$\lambda(I_{n}\otimes A_{1}-B^{T}_{1}\otimes I_{m})=\{\lambda_{i}-\mu_{j}|i=1,2,\cdots,m;\ j=1,2,\cdots,n\}.$$
\end{lemma}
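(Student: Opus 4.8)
The plan is to triangularize both matrices simultaneously via Schur decompositions and then read the eigenvalues off a triangular Kronecker expression. A quick heuristic first: if $A_1$ and $B_1$ happened to be diagonalizable, with $A_1 x = \lambda_i x$ and $B_1^{T} y = \mu_j y$ (recall $B_1$ and $B_1^{T}$ share the same spectrum), the mixed-product rule for Kronecker products gives
$$\bigl(I_n\otimes A_1 - B_1^{T}\otimes I_m\bigr)(y\otimes x) = y\otimes(A_1 x) - (B_1^{T} y)\otimes x = (\lambda_i-\mu_j)(y\otimes x),$$
and a dimension count shows these $mn$ vectors $y\otimes x$ exhaust the spectrum. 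To handle defective matrices and get multiplicities right, I would replace diagonalization by the Schur form.

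Concretely, the first step is to write $A_1 = U R_A U^{*}$ and $B_1^{T} = V R_B V^{*}$ with $U,V$ unitary and $R_A,R_B$ upper triangular, whose diagonals list $\lambda_1,\dots,\lambda_m$ and $\mu_1,\dots,\mu_n$ respectively. The second step substitutes these into $I_n\otimes A_1 - B_1^{T}\otimes I_m$ and applies the identity $(P\otimes Q)(S\otimes T)=(PS)\otimes(QT)$ together with $I_n = VV^{*}$ and $I_m = UU^{*}$, yielding
$$I_n\otimes A_1 - B_1^{T}\otimes I_m = (V\otimes U)\bigl(I_n\otimes R_A - R_B\otimes I_m\bigr)(V\otimes U)^{*}.$$
Since $V\otimes U$ is unitary, this is a similarity transformation and the spectrum is unchanged. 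The third step is the structural observation that $I_n\otimes R_A$ is block diagonal with every diagonal block equal to the upper triangular $R_A$, while $R_B\otimes I_m$ is block upper triangular with diagonal blocks $\mu_j I_m$; hence $I_n\otimes R_A - R_B\otimes I_m$ is block upper triangular with $j$-th diagonal block $R_A - \mu_j I_m$, which is itself upper triangular with diagonal entries $\lambda_i - \mu_j$. As the eigenvalues of a (block) triangular matrix are the union of those of its diagonal blocks, the spectrum is precisely $\{\lambda_i - \mu_j : 1\le i\le m,\ 1\le j\le n\}$, with multiplicity.

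I expect the only delicate point to be bookkeeping of the Kronecker ordering: one must fix the convention $C\otimes D = (c_{k\ell}D)$ consistently with the $\text{vec}$ operator of Lemma \ref{lemma3}, and then verify that ``block upper triangular with upper triangular diagonal blocks'' is genuinely upper triangular after the natural reordering of indices — a routine but necessary check. Everything else follows directly from the Schur factorization and the mixed-product rule, so there is no substantive obstacle beyond this indexing care.
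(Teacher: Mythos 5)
Your proof is correct. Note, however, that the paper does not prove this lemma at all: it is quoted verbatim from the cited reference \cite{ref20} as a known fact about Kronecker sums, so there is no in-paper argument to compare against. Your Schur-decomposition route is the standard textbook proof and is complete: the similarity $I_n\otimes A_1 - B_1^{T}\otimes I_m = (V\otimes U)\bigl(I_n\otimes R_A - R_B\otimes I_m\bigr)(V\otimes U)^{*}$ is verified by the mixed-product rule, the conjugated matrix is genuinely upper triangular under the convention $C\otimes D=(c_{k\ell}D)$ (block upper triangular with upper triangular diagonal blocks $R_A-\mu_j I_m$), and reading off the diagonal gives exactly $\{\lambda_i-\mu_j\}$ with the correct multiplicities; the observation that $B_1^{T}$ shares the spectrum of $B_1$ closes the loop. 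The opening eigenvector heuristic is dispensable but harmless, and your closing caveat about index ordering is exactly the right routine check.
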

By Lemma \ref{lemma3}, we can rewrite \eqref{eq12} as
\begin{equation}\label{eq13}
(A^{\ast}\otimes I+I\otimes A^{T})\text{vec}(X)=\text{vec}(Q).
\end{equation}
Thus, we have $$\widetilde{A}x=q,$$
where $$\widetilde{A}=A^{\ast}\otimes I+I\otimes A^{T},\ \ x=\text{vec}(X),\ \ q=\text{vec}(Q).$$
Since $W$ is a real symmetric positive definite matrix and $T$ is a real symmetric positive semi-definite matrix, we have $W^{T}=W, T^{T}=T$. As $A$ can be decomposed as $A=W+iT$, then \eqref{eq13} is equivalent to
\begin{equation}\label{eq14}
[(W\otimes I+I\otimes W)+i(I\otimes T-T\otimes I)]\text{vec}(X)=\text{vec}(Q).
\end{equation}
Therefore,we can obtain
\begin{equation}\label{eq15}
\widetilde{A}x=(\widetilde{W}+i\widetilde{T})x=q,
\end{equation}
where $$\widetilde{W}=W\otimes I+I\otimes W,\ \ \widetilde{T}=I\otimes T-T\otimes I.$$

Next, we apply the GADI iterative format to \eqref{eq15}, and there is
\begin{equation}\label{eq16}
\left\{
\begin{array}{ll}
(\alpha I_{n^{2}}+\widetilde{W})x^{(k+\frac{1}{2})}=(\alpha I_{n^{2}}-i\widetilde{T})x^{(k)}+q,\\
(\alpha I_{n^{2}}+i\widetilde{T})x^{(k+1)}=(i\widetilde{T}-(1-\omega)\alpha I_{n^{2}})x^{(k)}+(2-\omega)\alpha x^{(k+\frac{1}{2})}.
\end{array}
\right.
\end{equation}

\textbf{Example 3.1}\ \ We consider the coefficient matrix of the Lyapunov equation \eqref{eq12} to have the following form:$$A=W+iT=(M+2tN+\frac{100}{(n+1)^{2}}I)+i(M+2tN-\frac{100}{(n+1)^{2}}I),\ \ \ Q=C^{T}C.$$
Here, $C=\left(
                              \begin{array}{ccc}
                                1 ,& \cdots, & 1 \\
                              \end{array}
                            \right)_{1\times n}$, $t$ is a control parameter, and $M,\ N\in\mathbb R^{n\times n}$ is a tridiagonal matrix, where
$$
M=\left(
\begin{array}{ccccc}
    2   &  -1   &  0   &  \cdots   &  0\\
     -1 &  2   &  -1  &   \cdots   &  0\\
     \vdots   &  \ddots  &  \ddots  &   \ddots   &  \vdots\\
     0   &  \cdots   &  -1  &   2   &  -1\\
     0   & \cdots   &  0  &   -1  &   2\\
  \end{array}
\right)_{n\times n},\ \ \ N=\left(
\begin{array}{ccccc}
    0   &  0.5   &  0   &  \cdots   &  0\\
     0.5 &  0   &  0.5  &   \cdots   &  0\\
     \vdots   &  \ddots  &  \ddots  &   \ddots   &  \vdots\\
     0   &  \cdots   &  0.5  &   0   &  0.5\\
     0   & \cdots   &  0  &   0.5  &   0\\
  \end{array}
\right)_{n\times n}.
$$

After transforming \eqref{eq12} into \eqref{eq15}, we set $t$ to be 0.01 and 0.1, and then apply the HSS method and the GADI method \eqref{eq16} for solving. The iteration methods start from the initial value $x^{(0)}=0$, and once the current residual norm satisfies$$\frac{\|q-\widetilde{A}x^{(k)}\|_{2}}{\|q-\widetilde{A}x^{(0)}\|_{2}}\leq10^{-6},$$ or $\frac{\|R(X^{(k)})\|_{2}}{\|R(X^{(0)})\|_{2}}\leq10^{-6}$, the iteration stops, where $R(X^{(k)})=Q-A^{T}X^{(k)}-X^{(k)}A.$\\

In the process of solving the continuous Lyapunov equation, the HSS and GADI methods do not have a theoretical basis to estimate the relatively optimal splitting parameters. We denote $\gamma_{min},\gamma_{max}$ as the minimum and maximum eigenvalues of $\widetilde{W}=W^{T}\otimes I+I\otimes W^{T}$, respectively. When the matrix dimension $n=16$, by changing the parameter $\omega$, we obtained the relevant numerical results as shown in Table \ref{tab3}. In the GADI method, we set the parameter $\alpha=\sqrt{\gamma_ {min}\gamma_{max}}$  and tried different cases where $ \omega$ takes the values 0.01, 0.1, 0, 0.5, 1, 1.5. From the table data, it can be observed that under different control parameters $t$, the number of iterations and time decrease as $t$ increases. Under the same control parameter, when $\omega<0.5$, the number of iteration steps is relatively less.

\begin{table}[!htbp]

\caption{Numerical results for Example 3.1\label{tab3}}
\begin{tabular}{llllll}
   \hline
   \multicolumn{6}{c}{algorithm GADI}\\
     \hline
    $n$ &control parameters ($t$)& $(\alpha,\ \omega)$ &RES & IT & CPU \\
   \hline
    16&$t=0.01$& (2.6198,0.01) &5.749e-06& 19 & 0.40s   \\

    16& $t=0.01$&(2.6198,0.1) &5.7843e-06& 20 & 0.42s  \\

     16& $t=0.01$& (2.6198,0) &5.3789e-06& 19 & 0.41s  \\

    16&$t=0.01$& (2.6198,0.5) &9.8617e-06& 25 &0.53s  \\

     16& $t=0.01$&(2.6198,1) &8.4038e-06& 40 &0.84s  \\

     16& $t=0.01$&(2.6198,1.5) &8.9073e-06& 84 &1.75s  \\
   \hline
    16& $t=0.1$&(3.081,0.01) &6.9929e-06& 15 & 0.35s   \\

    16&$t=0.1$& (3.081,0.1) &7.2687e-06& 16 & 0.36s  \\

    16&$t=0.1$  & (3.081,0) &6.4057e-06& 15 & 0.35s  \\

    16&$t=0.1$& (3.081,0.5) &7.7465e-06& 22 &0.49s  \\

     16& $t=0.1$&(3.081,1) &8.4099e-06& 36 &0.75s \\

     16& $t=0.1$&(3.081,1.5) &9.9062e-06& 77 &1.58s  \\
   \hline
 \end{tabular}
 \centering
 \end{table}

 We obtained the computational results of the HSS and GADI methods by gradually increasing the matrix dimension and trying different control parameters $t$, as shown in Table \ref{tab4}. These numerical results demonstrate that the HSS and GADI methods can effectively solve the continuous Lyapunov equation \eqref{eq12} with appropriate splitting parameters. We observed that the number of iterations and time obtained by both methods increase with the matrix dimension, but the GADI method is more efficient than the HSS method. For instance, when $n=32$, Fig. \ref{fig3} and Fig. \ref{fig4} illustrate the iteration steps and corresponding residuals of the HSS and GADI methods. To highlight the advantage of the GADI method, we showcase the time savings in Fig. \ref{fig5} and Fig. \ref{fig6}, where it is evident that the GADI method significantly reduces the computational time compared to the HSS method.

 \begin{table}[!htbp]
\caption{Numerical results for Example 3.1 \label{tab4}}
\begin{tabular}{llllll}
   \hline
    algorithm &$n$& control parameters ($t$)  & RES &IT &CPU  \\
    \hline
    HSS&8& $t=0.01$  & 8.4167e-06&13 & 0.03s  \\

    HSS& 8& $t=0.1$  & 6.493e-06 & 12 & 0.03s  \\

   GADI & 8& $t=0.01$ & 8.9841e-06& 10& 0.02s \\

   GADI & 8& $t=0.1$ & 2.872e-06 &  10&  0.02s \\
    \hline
    HSS&16& $t=0.01$  & 8.6899e-06&25 & 0.62s  \\

    HSS& 16& $t=0.1$  & 9.1129e-06 & 19 & 0.52s  \\

   GADI & 16& $t=0.01$ & 5.3789e-06& 19& 0.41s \\

   GADI & 16& $t=0.1$ & 6.4057e-06 &  15&  0.32s \\
    \hline
    HSS&24& $t=0.01$  & 8.3902e-06&36 & 5.36s  \\

    HSS& 24& $t=0.1$  & 5.9927e-06 & 23 & 3.73s  \\

   GADI & 24& $t=0.01$ &8.4557e-06& 26& 2.97s \\

   GADI & 24& $t=0.1$ & 8.9194e-06 &  18&  2.08s \\
    \hline
    HSS&32& $t=0.01$  & 9.7102e-06&45 & 40.23s  \\

    HSS& 32& $t=0.1$  & 6.7267e-06 & 24 & 21.64s  \\

   GADI & 32& $t=0.01$ & 8.4981e-06& 33& 17.33s \\

   GADI & 32& $t=0.1$ & 9.2584e-06 &  20&  10.65s \\
    \hline
    HSS&48& $t=0.01$  & 8.6595e-06&60 & 548.93s  \\

    HSS& 48& $t=0.1$  & 9.5955e-06 & 24 & 231.9s  \\

   GADI & 48& $t=0.01$ & 7.7812e-06& 45& 314.18s \\

   GADI & 48& $t=0.1$ & 9.7814e-06 &  22&  151.69s \\
   \hline
 \end{tabular}
 \centering
 \end{table}

\begin{figure}[H]
\centering
    \begin{minipage}[t]{0.49\textwidth}
        \centering
        \includegraphics[width=1.1\textwidth]{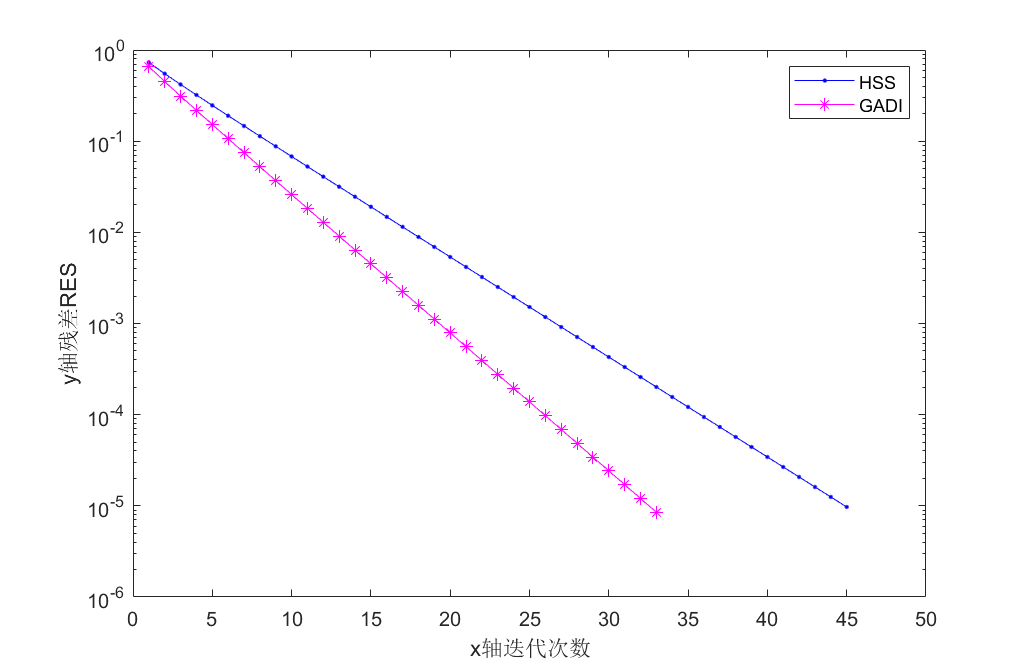}
        \caption{\label{1}$t=0.01$\label{fig3}}
        
    \end{minipage}
    \begin{minipage}[t]{0.49\textwidth}
        \centering
        \includegraphics[width=1.1\textwidth]{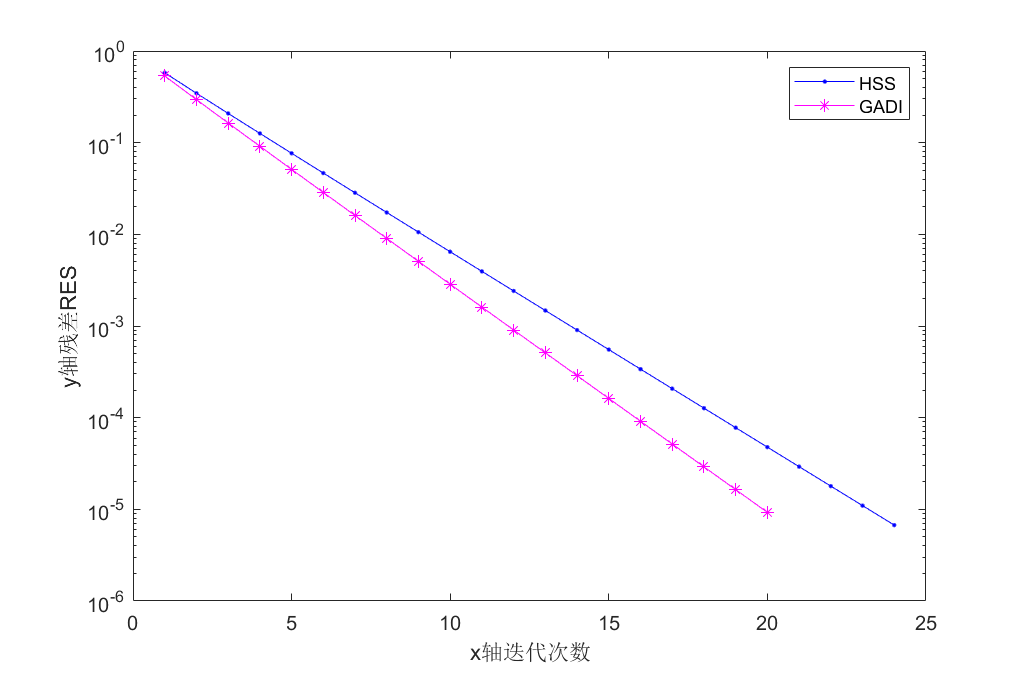}
        \caption{\label{1}$t=0.1$\label{fig4}}
        \end{minipage}
\end{figure}
\begin{figure}[H]
\centering
    \begin{minipage}[t]{0.49\textwidth}
        \centering
        \includegraphics[width=1.1\textwidth]{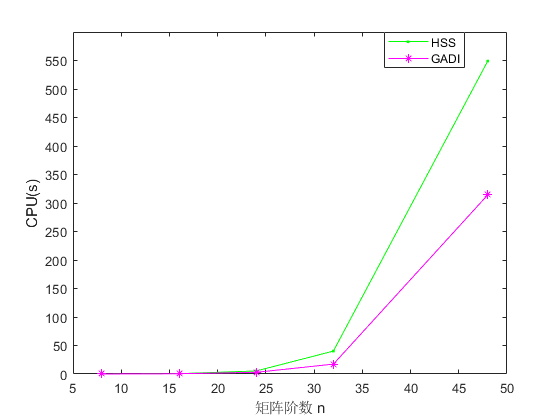}
        \caption{\label{1}$t=0.01$\label{fig5}}
       
    \end{minipage}
    \begin{minipage}[t]{0.49\textwidth}
        \centering
        \includegraphics[width=1.1\textwidth]{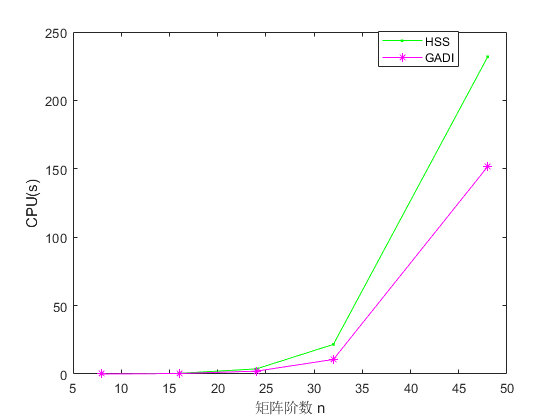}
        \caption{\label{1}$t=0.1$\label{fig6}}
        \end{minipage}
\end{figure}
\section{GADI for solving the Riccati equation with complex coefficients}
\label{sec:solve Riccati equation}
\qquad We consider the continuous time algebraic Riccati equation (CARE):
\begin{equation}\label{eq17}
  A^{\ast}X+XA+Q-XGX=0,
\end{equation}
where $A,\ Q,\ G\in\mathbb C^{n\times n},\ Q^{\ast}=Q,\ K^{\ast}=K$ are known matrix, and $X\in\mathbb C^{n\times n}$ is an unknown matrix. Here, $A$ can be decomposed as $A=W+iT$, where $W\in\mathbb R^{n\times n}$ is a symmetric positive definite matrix, and $T\in\mathbb R^{n\times n}$ is a symmetric positive semi-definite matrix.

Next, we use the Newton’s method to solve equation \eqref{eq17} and obtain the following equation:
\begin{equation}\label{eq18}
  (A-GX_{k})^{\ast}X_{k+1}+X_{k+1}(A-GX_{k})+X_{k}GX_{k}+Q=0.
\end{equation}
Here, we let $$A_{k}=A-GX_{k},\ Q_{k}=-X_{k}GX_{k}-Q,$$then we can obtain the Lyapunov equation equivalent to \eqref{eq18}
\begin{equation}\label{eq19}
  A_{k}^{\ast}X_{k+1}+X_{k+1}A_{k}=Q_{k}.
\end{equation}
According to Lemma \ref{lemma3}, we can rewrite \eqref{eq19} as
\begin{equation}\label{eq20}
(A_{k}^{\ast}\otimes I+I\otimes A_{k}^{T})\text{vec}(X_{k+1})=\text{vec}(Q_{k}).
\end{equation}
Therefore,$$\widetilde{A}_{k}x_{k+1}=q_{k},$$
where $$\widetilde{A}_{k}=A_{k}^{\ast}\otimes I+I\otimes A_{k}^{T},\ \ x_{k+1}=\text{vec}(X_{k+1}),\ \ q_{k}=\text{vec}(Q_{k}).$$
Since $W$ is a real symmetric positive definite matrix and $T$ is a real symmetric positive semi-definite matrix, then we have $W^{T}=W, T^{T}=T$. Due to $A=W+iT$, then $A_{k}=W+iT-GX_{k}$. Substituting this into \eqref{eq20}, we get
\begin{equation}\label{eq21}
[W\otimes I+I\otimes W+i(I\otimes T-T\otimes I)-(X_{k}G\otimes I+I\otimes X_{k}G)]x_{k+1}=q_{k}.
\end{equation}
That is,\begin{equation}\label{eq22}
\widetilde{A}_{k}x_{k+1}=(W_{k}+iT_{k}-G_{k})x_{k+1}=q_{k},
\end{equation}
where$$W_{k}=W\otimes I+I\otimes W,\ \ \ T_{k}=I\otimes T-T\otimes I,\ \ \ G_{k}=X_{k}G\otimes I+I\otimes X_{k}G.$$

Next, we apply the GADI iteration scheme to \eqref{eq22} and obtain
\begin{equation}\label{eq23}
\left\{
\begin{array}{ll}
(\alpha_{k}I_{n^{2}}+W_{k})x_{k+1}^{(l+\frac{1}{2})}=(\alpha_{k}I_{n^{2}}-iT_{k}+G_{k})x_{k+1}^{(l)}+q_{k},\\
(\alpha_{k}I_{n^{2}}+iT_{k}-G_{k})x_{k+1}^{(l+1)}=(iT_{k}-G_{k}-(1-\omega_{k})\alpha_{k}I_{n^{2}})x_{k+1}^{(l)}+(2-\omega_{k})\alpha_{k}x_{k+1}^{(l+\frac{1}{2})},
\end{array}
\right.
\end{equation}
where $l=0,1,\cdots,\ x_{k+1}^{(0)}=x_{k}, \alpha_{k}>0$ and $\omega_{k}\in[0,2)$.

Combining Newton's method and GADI iterative method, we get Newton GADI algorithm, which is summarized in algorithm \ref{alg2} .
\floatname{algorithm}{Algorithm}
\renewcommand{\algorithmicrequire}{\textbf{Input:}}
\renewcommand{\algorithmicensure}{\textbf{Output:}}
\begin{algorithm}
  \caption{Newton GADI algorithm for solving the Riccati equation \eqref{eq17} with complex coefficients}
  \label{alg2}
  \begin{algorithmic}[1]
    \REQUIRE Matrix $W, T, Q, G $, where $A=W+iT$, parameter $\alpha_{k}, \omega_{k}$, integers $k_{max}>1$ and $l_{max}>1$, outer and inner iteration error tolerances $\varepsilon_{out}$ and $\varepsilon_{inn}$;
    \ENSURE Approximation $X_{k+1}\approx X$, where $X$ is a solution of Riccati equation $A^{\ast}X+XA+Q-XGX=0$.
    \STATE Calculae $\beta=1+\|A\|_{\infty}, B=A+\beta I$, and obtain the initial matrix $X_{0}=X^{-1}$ by solving the Lyapunov equation $B^{\ast}X+XB-2Q=0$;
    \STATE Let $X_{k+1}^{(0)}=X_{0},$ and calculate $Q_{k}=-X_{k}GX_{k}-Q;$
    \STATE Calculate $$W_{k}=W\otimes I+I\otimes W,\ \ \ T_{k}=I\otimes T-T\otimes I,$$ $$G_{k}=X_{k}G\otimes I+I\otimes X_{k}G,\ \ \ \widetilde{A}_{k}=W_{k}+iT_{k}-G_{k};$$
    \STATE Calculate $x_{0}=\text{vec}(X_{0}),\ \ q_{k}=\text{vec}(Q_{k})$,\ obtain $x_{0}$ and $q_{k}$, let $x_{k+1}^{(0)}=x_{0};$
    \STATE Solve GADI iteration framework \eqref{eq23} to obtain $x_{k+1}=x_{k+1}^{(l_{k})}$ such that $$\|\widetilde{A}_{k}x_{k+1}^{(l_{k})}-q_{k}\|_{2}<\varepsilon_{inn},\ or\ l_{k}>l_{max};$$
    \STATE Calculate $x_{k+1}=\text{vec}(X_{k+1});$
    \STATE Calculate relative residuals $$\text{Res}(X_{k+1})=\frac{\|A^{\ast}X_{k+1}+X_{k+1}A+Q-X_{k+1}GX_{k+1}\|_{2}}{\|Q\|_{2}};$$
           \IF {$\text{Res}(X_{k+1})<\varepsilon_{out}$ or $k>k_{max}$}
               \STATE  Obtain an approximate solution $\widetilde{X}=X_{k+1}$ for the Riccati equation \eqref{eq17};
           \ELSE
               \STATE Let $k=k+1$ and return to step 2;
           \ENDIF
  \end{algorithmic}
\end{algorithm}

Next, we analyze the computational complexity of algorithm \ref{alg2}. This algorithm combines the external iterative Newton method with the internal iterative GADI method. The computational complexity of the external iteration mainly involves solving equation \eqref{eq19}, while the internal iteration's complexity mainly involves solving equation \eqref{eq23}. Here, the computations mainly involve matrix addition, subtraction, multiplication, inversion, as well as matrix flattening and vector operations. Since $W_{k},\ T_{k}\in\mathbb R^{n^{2}\times n^{2}},\ G_{k}\in\mathbb C^{n^{2}\times n^{2}},\ x_{k+1}\in\mathbb C^{n^{2}}$, then the computational complexity of the matrix $Q_{k}$ is $2n^{3}+n^{2}$. The computational complexity of $W_{k} , T_{k} , G_{k}$ are $2n^{4},\ 2n^{4},\ 2n^{4}+2n^{2}$, respectively, and the computational complexity of solving \eqref{eq22} with CG method is $O(n^{2})$. Therefore, the total computational complexity of algorithm 2 is $6n^{4}+2n^{3}+O(n^{2}).$
\subsection{Convergence analysis}
\qquad As the Newton iteration is quadratically convergent, to prove the convergence of Algorithm \ref{alg2}, we only need to demonstrate that the GADI iteration format \eqref{eq23} converges.

\begin{theorem}\label{theorem3}
For the CARE \eqref{eq17}, assuming that $(A, G)$ is stable and $(A, Q)$ is detectable, where $A=W+iT\in\mathbb C^{n\times n}$ is a non-singular matrix, with $W\in\mathbb R^{n\times n}$ being a symmetric positive definite matrix, and $T\in\mathbb R^{n\times n}$ being a symmetric positive semi-definite matrix. If the parameter $\alpha_{k}>0,\ \ 0\leq\omega_{k}<2$, then for any $k=0,1,2,\cdots$, the iteration sequence $\{x_{k+1}^{(l)}\}_{l=0}^{\infty}$ defined by \eqref{eq23} converges to $x_{k+1}$.
\end{theorem}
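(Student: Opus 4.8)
The plan is to observe that, for a fixed outer index $k$, the scheme \eqref{eq23} is nothing but the GADI iteration \eqref{eq7} applied to the linear system $\widetilde{A}_k x_{k+1} = q_k$ under the replacements $W \mapsto W_k$, $iT \mapsto iT_k - G_k$ and $A \mapsto \widetilde{A}_k = W_k + (iT_k - G_k)$, and then to repeat the arguments of Theorems \ref{theorem1} and \ref{theorem2} verbatim in this notation. First I would eliminate the half-step vector $x_{k+1}^{(l+\frac{1}{2})}$ from \eqref{eq23} to obtain $x_{k+1}^{(l+1)} = M(\alpha_k,\omega_k)x_{k+1}^{(l)} + N(\alpha_k,\omega_k)$ with
$$M(\alpha_k,\omega_k) = (\alpha_k I + iT_k - G_k)^{-1}(\alpha_k I + W_k)^{-1}\big[\alpha_k^{2} I + W_k(iT_k - G_k) - (1-\omega_k)\alpha_k\widetilde{A}_k\big].$$
Using that $(\alpha_k I + W_k)^{-1}(\alpha_k I - W_k) = (\alpha_k I - W_k)(\alpha_k I + W_k)^{-1}$ and expanding $2M(\alpha_k,\omega_k)$ exactly as in the proof of Theorem \ref{theorem2}, this regroups into
$$M(\alpha_k,\omega_k) = \frac{1}{2}\big[(2-\omega_k)\mathcal{T}_k(\alpha_k) + \omega_k I\big],\qquad \mathcal{T}_k(\alpha_k) = (\alpha_k I + iT_k - G_k)^{-1}(\alpha_k I - W_k)(\alpha_k I + W_k)^{-1}(\alpha_k I - iT_k + G_k),$$
where $\mathcal{T}_k(\alpha_k)$ is the HSS-type iteration matrix of the split $\widetilde{A}_k = W_k + (iT_k - G_k)$, i.e. the analogue of $T(\alpha)$ in \eqref{eq10}. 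Since $M(\alpha_k,\omega_k)$ is an affine function of $\mathcal{T}_k(\alpha_k)$, its eigenvalues are $\frac{1}{2}[(2-\omega_k)\lambda_j(\mathcal{T}_k(\alpha_k)) + \omega_k]$, so with $0\le\omega_k<2$ convergence reduces to the single estimate $\rho(\mathcal{T}_k(\alpha_k)) < 1$.

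To get this, I would argue as in Theorem \ref{theorem1}: $\mathcal{T}_k(\alpha_k)$ is similar to $(\alpha_k I - W_k)(\alpha_k I + W_k)^{-1}(\alpha_k I - (iT_k - G_k))(\alpha_k I + (iT_k - G_k))^{-1}$, whence
$$\rho(\mathcal{T}_k(\alpha_k)) \le \big\|(\alpha_k I - W_k)(\alpha_k I + W_k)^{-1}\big\|_2\;\big\|(\alpha_k I - (iT_k - G_k))(\alpha_k I + (iT_k - G_k))^{-1}\big\|_2 .$$
The first factor is $<1$ for every $\alpha_k>0$ because $W$ being symmetric positive definite forces the eigenvalues of $W_k = W\otimes I + I\otimes W$ to be $\mu_i+\mu_j>0$ (a Kronecker eigenvalue argument in the spirit of Lemma \ref{lemma4}), so that factor equals $\max_{i,j}|\alpha_k-\mu_i-\mu_j|/(\alpha_k+\mu_i+\mu_j)<1$. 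For the second factor, which is a Cayley-type transform of $iT_k - G_k$, I would want $\|(\alpha_k I - (iT_k - G_k))(\alpha_k I + (iT_k - G_k))^{-1}\|_2 \le 1$; this holds iff $iT_k - G_k$ is accretive, i.e. $\mathrm{Re}\langle (iT_k - G_k)v,v\rangle \ge 0$ for all $v$. Since $T_k = I\otimes T - T\otimes I$ is Hermitian, $\mathrm{Re}\langle iT_k v,v\rangle = 0$, and the whole question collapses to showing that the Hermitian part of $-G_k$, namely $-\frac{1}{2}\big[(X_kG+GX_k)\otimes I + I\otimes(X_kG+GX_k)\big]$, is positive semidefinite, equivalently $X_kG+GX_k\preceq 0$.

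This last semidefiniteness claim is the crux, and it is exactly where the hypotheses ``$(A,G)$ stabilizable, $(A,Q)$ detectable'' and the stabilizing initialization of Algorithm \ref{alg2} enter: they guarantee that the outer Newton iterates $X_k$ are Hermitian and that each closed-loop matrix $A_k = A - GX_k$ is stable, and I would combine the Hermiticity of $X_k$ with the sign pattern imposed by the Riccati/Newton recursion $A_k^{*}X_{k+1}+X_{k+1}A_k = -X_kGX_k - Q$ to extract the required inequality $X_kG+GX_k\preceq 0$. I expect this step to be the main obstacle — if it cannot be established in full generality, the fallback is to abandon the crude submultiplicative split and estimate $\rho(\mathcal{T}_k(\alpha_k))$ directly from the stability of $\widetilde{A}_k = A_k^{*}\otimes I + I\otimes A_k^{T}$, whose eigenvalues $\overline{\lambda_i(A_k)}+\lambda_j(A_k)$ all have negative real part by Lemma \ref{lemma4}, which in particular makes $\widetilde{A}_k$ nonsingular so that $x_{k+1}$ is well defined. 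Once $\rho(\mathcal{T}_k(\alpha_k))<1$ is secured, the chain $\rho(M(\alpha_k,\omega_k)) \le \frac{1}{2}[(2-\omega_k)\rho(\mathcal{T}_k(\alpha_k))+\omega_k] < \frac{1}{2}[(2-\omega_k)+\omega_k] = 1$ closes the proof and shows $\{x_{k+1}^{(l)}\}_{l=0}^{\infty}$ converges to $x_{k+1}$.
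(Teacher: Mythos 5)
Your derivation is, up to notation, the paper's own proof: the paper eliminates the half-step, writes $M_k(\alpha_k,\omega_k)=\tfrac{1}{2}\left[(2-\omega_k)T_k(\alpha_k)+\omega_k I\right]$ with exactly your HSS-type matrix $T_k(\alpha_k)$, passes to the similar matrix, and splits the spectral-radius estimate into the two factors $\|W_k^{L}\|_2$ and $\|T_k^{R}\|_2$. The first factor is handled equivalently (the paper via a Rayleigh-quotient computation, you via the Kronecker eigenvalue count $\mu_i+\mu_j>0$; both give $\|W_k^{L}\|_2<1$ from positive definiteness of $W_k$).

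The step you single out as the crux --- that $\|(\alpha_k I - iT_k + G_k)(\alpha_k I + iT_k - G_k)^{-1}\|_2 \le 1$, equivalently that $iT_k - G_k$ is accretive, equivalently $X_kG+GX_k\preceq 0$ --- is precisely the step the paper does not actually prove. The paper writes the bound $\|T_k^{R}\|_2^{2}\leq\frac{\|iT_k-G_k\|_2^{2}-2\alpha_k\min \mathrm{Re}(\lambda(iT_k-G_k))+\alpha_k^{2}}{\|iT_k-G_k\|_2^{2}+2\alpha_k\min \mathrm{Re}(\lambda(iT_k-G_k))+\alpha_k^{2}}$ and then concludes $\|T_k^{R}\|_2<1$ ``since $\alpha_k>0$'', which tacitly assumes $\min \mathrm{Re}(\lambda(iT_k-G_k))>0$; that positivity is never established, and the passage from the quadratic form $y^{\ast}\left((iT_k-G_k)+(iT_k-G_k)^{\ast}\right)y$ to a bound in terms of eigenvalue real parts is itself only legitimate for the eigenvalues of the Hermitian part, since $iT_k-G_k$ need not be normal. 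Worse, the inequality you would need, $X_kG+GX_k\preceq 0$, points the wrong way for the paper's own test problem, where $G=10^{-1}I\succ 0$ and the Newton iterates are positive semidefinite. So you have correctly reproduced the published argument and, more usefully, isolated the one unproved inequality on which it silently rests; neither your proposal nor the paper closes it, and your fallback (stability of $\widetilde{A}_k$ via Lemma \ref{lemma4}) yields nonsingularity of $\widetilde{A}_k$ but not, by itself, a bound on the Cayley factor. Regard this as a genuine gap shared with the paper's proof rather than a defect unique to your approach.
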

\begin{proof}
Based on equation \eqref{eq17}, we have the following formula:
$$x^{(l+\frac{1}{2})}_{k+1}=(\alpha_{k}I_{n^{2}}+W_{k})^{-1}[(\alpha_{k}I_{n^{2}}-iT_{k}+G_{k})x^{(l)}_{k+1}+q_{k}],$$
\begin{align*}
x^{(l+1)}_{k+1}&=(\alpha_{k}I_{n^{2}}+iT_{k}-G_{k})^{-1}[(iT_{k}-G_{k}-(1-\omega_{k})\alpha_{k}I_{n^{2}})x^{(l)}_{k+1}+(2-\omega_{k})\alpha_{k} x^{(l+\frac{1}{2})}_{k+1}]\\
&=(\alpha_{k}I+iT_{k}-G_{k})^{-1}[(iT_{k}-G_{k}-(1-\omega_{k})\alpha_{k}I_{n^{2}})+(2-\omega_{k})\alpha_{k}(\alpha_{k}I_{n^{2}}+W_{k})^{-1}(\alpha_{k}I_{n^{2}}\\
&-iT_{k}+G_{k})]x^{(l)}_{k+1}+(2-\omega_{k})\alpha_{k}(\alpha_{k}I_{n^{2}}+iT_{k}-G_{k})^{-1}(\alpha_{k}I_{n^{2}}+W_{k})^{-1}q_{k}\\
&=(\alpha_{k}I+iT_{k}-G_{k})^{-1}(\alpha_{k}I_{n^{2}}+W_{k})^{-1}[\alpha_{k}^{2}I_{n^{2}}+iW_{k}T_{k}-W_{k}G_{k}-(1-\omega_{k})\alpha_{k}A_{k}]x^{(l)}_{k+1}\\
&+(2-\omega_{k})\alpha_{k}(\alpha_{k}I_{n^{2}}+iT_{k}-G_{k})^{-1}(\alpha_{k}I_{n^{2}}+W_{k})^{-1}q_{k}.\\
\end{align*}
Here, we define $$M_{k}(\alpha_{k},\omega_{k})=(\alpha_{k}I+iT_{k}-G_{k})^{-1}(\alpha_{k}I_{n^{2}}+W_{k})^{-1}[\alpha_{k}^{2}I_{n^{2}}+iW_{k}T_{k}-W_{k}G_{k}-(1-\omega_{k})\alpha_{k}A_{k}],$$
$$N_{k}(\alpha_{k},\omega_{k})=(2-\omega_{k})\alpha_{k}(\alpha_{k}I_{n^{2}}+iT_{k}-G_{k})^{-1}(\alpha_{k}I_{n^{2}}+W_{k})^{-1}q_{k},$$
then we have $$x^{(l+1)}_{k+1}=M_{k}(\alpha_{k},\omega_{k})x^{(l)}_{k}+N_{k}(\alpha_{k},\omega_{k}).$$
Next, it is necessary to prove that for$\alpha_{k}>0,\ \ 0\leq\omega_{k}<2$, there is $\rho(M_{k}(\alpha_{k},\omega_{k}))<1$. Since
\begin{align*}
2M_{k}(\alpha_{k},\omega_{k})&=2(\alpha_{k}I+iT_{k}-G_{k})^{-1}(\alpha_{k}I_{n^{2}}+W_{k})^{-1}[\alpha_{k}^{2}I_{n^{2}}+iW_{k}T_{k}-W_{k}G_{k}-(1-\omega_{k})\alpha_{k}A_{k}]\\
&=(2-\omega_{k})(\alpha_{k}I+iT_{k}-G_{k})^{-1}(\alpha_{k}I_{n^{2}}+W_{k})^{-1}(\alpha_{k}I_{n^{2}}-W_{k})(\alpha_{k}I_{n^{2}}-iT_{k}+G_{k})\\
&+\omega_{k}(\alpha_{k}I+iT_{k}-G_{k})^{-1}(\alpha_{k}I_{n^{2}}+W_{k})^{-1}(\alpha_{k}I_{n^{2}}+W_{k})(\alpha_{k}I+iT_{k}-G_{k})\\
&=(2-\omega_{k})T_{k}(\alpha_{k})+\omega_{k}I_{n^{2}},
\end{align*}
where $$T_{k}(\alpha_{k})=(\alpha_{k}I+iT_{k}-G_{k})^{-1}(\alpha_{k}I_{n^{2}}+W_{k})^{-1}(\alpha_{k}I_{n^{2}}-W_{k})(\alpha_{k}I_{n^{2}}-iT_{k}+G_{k}),$$
therefore, we can get $$M_{k}(\alpha_{k},\omega_{k})=\frac{1}{2}[(2-\omega_{k})T_{k}(\alpha_{k})+\omega_{k}I_{n^{2}}].$$
Due to $$\lambda_{i}(M_{k}(\alpha_{k},\omega_{k}))=\frac{1}{2}[(2-\omega_{k})\lambda_{i}(T_{k}(\alpha_{k}))+\omega_{k}]\ (i=1,2,\cdots n),$$
then $$\rho(M_{k}(\alpha_{k},\omega_{k}))\leq\frac{1}{2}[(2-\omega_{k})\rho(T_{k}(\alpha_{k}))+\omega_{k}].$$
Let $$\widetilde{T}_{k}(\alpha_{k})=(\alpha_{k}I_{n^{2}}+W_{k})^{-1}(\alpha_{k}I_{n^{2}}-W_{k})(\alpha_{k}I_{n^{2}}-iT_{k}+G_{k})(\alpha_{k}I+iT_{k}-G_{k})^{-1},$$ it can be seen that $T_{k}(\alpha_{k})$  is similar to $\widetilde{T}_{k}(\alpha_{k})$  through the matrix $\alpha_{k}I+iT_{k}-G_{k}$. Therefore, we have
\begin{align*}
\rho(T_{k}(\alpha_{k}))&\leq\parallel(\alpha_{k}I_{n^{2}}+W_{k})^{-1}(\alpha_{k}I_{n^{2}}-W_{k})(\alpha_{k}I_{n^{2}}-iT_{k}+G_{k})(\alpha_{k}I+iT_{k}-G_{k})^{-1}\parallel_{2}\\
&\leq\parallel(\alpha_{k}I_{n^{2}}+W_{k})^{-1}(\alpha_{k}I_{n^{2}}-W_{k})\parallel_{2}\parallel(\alpha_{k}I_{n^{2}}-iT_{k}+G_{k})(\alpha_{k}I+iT_{k}-G_{k})^{-1}\parallel_{2}\\
&=\parallel W_{k}^{L}\parallel_{2}\parallel T_{k}^{R}\parallel_{2},
\end{align*}
where $$W_{k}^{L}=(\alpha_{k}I_{n^{2}}+W_{k})^{-1}(\alpha_{k}I_{n^{2}}-W_{k}),\ \ T_{k}^{R}=(\alpha_{k}I_{n^{2}}-iT_{k}+G_{k})(\alpha_{k}I+iT_{k}-G_{k})^{-1}.$$
For $y\in\mathbb C^{n^{2}\times1}$, we get
\begin{align*}
\parallel W_{k}^{L}\parallel_{2}^{2}&=\max_{\parallel y\parallel_{2}=1}\frac{\parallel(\alpha_{k}I_{n^{2}}-W_{k})y\parallel_{2}^{2}}{\parallel(\alpha_{k}I_{n^{2}}+W_{k})y\parallel_{2}^{2}}\\
&=\max_{\parallel y\parallel_{2}=1}\frac{\parallel W_{k}y\parallel_{2}^{2}-\alpha_{k}y^{T}(W_{k}^{T}+W_{k})y+\alpha_{k}^{2}}{\parallel W_{k}y\parallel_{2}^{2}+\alpha_{k}y^{T}(W_{k}^{T}+W_{k})y+\alpha_{k}^{2}}\\
&\leq\max_{\parallel y\parallel_{2}=1}\frac{\parallel W_{k}y\parallel_{2}^{2}-2\alpha_{k}\min Re(\lambda(W_{k}))+\alpha_{k}^{2}}{\parallel W_{k}y\parallel_{2}^{2}+2\alpha_{k}\min Re(\lambda(W_{k}))+\alpha_{k}^{2}}\\
&\leq\frac{\parallel W_{k}\parallel_{2}^{2}-2\alpha_{k}\min Re(\lambda(W_{k}))+\alpha_{k}^{2}}{\parallel W_{k}\parallel_{2}^{2}+2\alpha\min Re(\lambda(W_{k}))+\alpha_{k}^{2}}.
\end{align*}
There is also a conclusion
$$\parallel T_{k}^{R}\parallel_{2}^{2}\leq\frac{\parallel iT_{k}-G_{k}\parallel_{2}^{2}-2\alpha_{k}\min Re(\lambda(iT_{k}-G_{k}))+\alpha_{k}^{2}}{\parallel iT_{k}-G_{k}\parallel_{2}^{2}+2\alpha_{k}\min Re(\lambda(iT_{k}-G_{k}))+\alpha_{k}^{2}}.$$
Since $\alpha_{k}>0$, we have $$\parallel W_{k}^{L}\parallel_{2}<1,\ \ \parallel T_{k}^{R}\parallel_{2}<1,$$
thus implying $\rho(T_{k}(\alpha_{k}))<1$. Furthermore, as $0\leq\omega_{k}<2$, it follows that 
$$\rho(M_{k}(\alpha_{k},\omega_{k}))\leq\frac{1}{2}[(2-\omega_{k})\rho(T_{k}(\alpha_{k}))+\omega_{k}]<1.$$
Therefore, the iterative format in equation \eqref{eq23} converges.
\end{proof}
\subsection{Numerical experiments}
\qquad\textbf{Example 4.2.1\ \ }We consider the coefficient matrices of the Riccati equation \eqref{eq17} as $$A=W+iT,\ Q=C^{T}C,\ G=10^{-1}I_{n},$$ where $I_{n}$ is the $n$-order identity matrix, $C=\left(
                              \begin{array}{ccc}
                                1 ,& \cdots, & 1 \\
                              \end{array}
                            \right)_{1\times n}$, and $W,\ T\in\mathbb R^{n\times n}$ are tridiagonal matrices defined as follows:$$
W=\left(
\begin{array}{ccccc}
    2   &  -1   &  0   &  \cdots   &  0\\
     -1 &  2   &  -1  &   \cdots   &  0\\
     \vdots   &  \ddots  &  \ddots  &   \ddots   &  \vdots\\
     0   &  \cdots   &  -1  &   2   &  -1\\
     0   & \cdots   &  0  &   -1  &   2\\
  \end{array}
\right)_{n\times n},\ \ \ N=\left(
\begin{array}{ccccc}
    0.5   &  0.1   &  0   &  \cdots   &  0\\
     0.1 &  0.5   &  0.1  &   \cdots   &  0\\
     \vdots   &  \ddots  &  \ddots  &   \ddots   &  \vdots\\
     0   &  \cdots   &  0.1  &   0.5   &  0.1\\
     0   & \cdots   &  0  &   0.1  &   0.5\\
  \end{array}
\right)_{n\times n}.$$

\begin{table}[!htbp]

\caption{Numerical results for Example 4.2.1\label{tab5}}
\begin{tabular}{lllll}
   \hline
   algorithm &$n$&Res & IT & CPU(s)  \\
   \hline
   Newton-GADI& 8 & 7.485e-06 & 33 & 0.06s    \\
   \hline
   Newton-GADI& 16 & 9.0282e-06 & 62 & 1.25s   \\
   \hline
   Newton-GADI& 24 & 9.3853e-06 &91 & 12.51s    \\
   \hline
   Newton-GADI& 32& 9.5209e-06 &120 & 64.16s    \\
   \hline
   Newton-GADI& 48  & 9.6239e-06&178 & 1140.9s   \\
   \hline
   Newton-GADI& 64& 9.6621e-06 &236 & 89662s    \\
   \hline
 \end{tabular}
 \centering
 \end{table}
 \begin{figure}[H]
  \center
  \includegraphics[width=13cm,height=8cm] {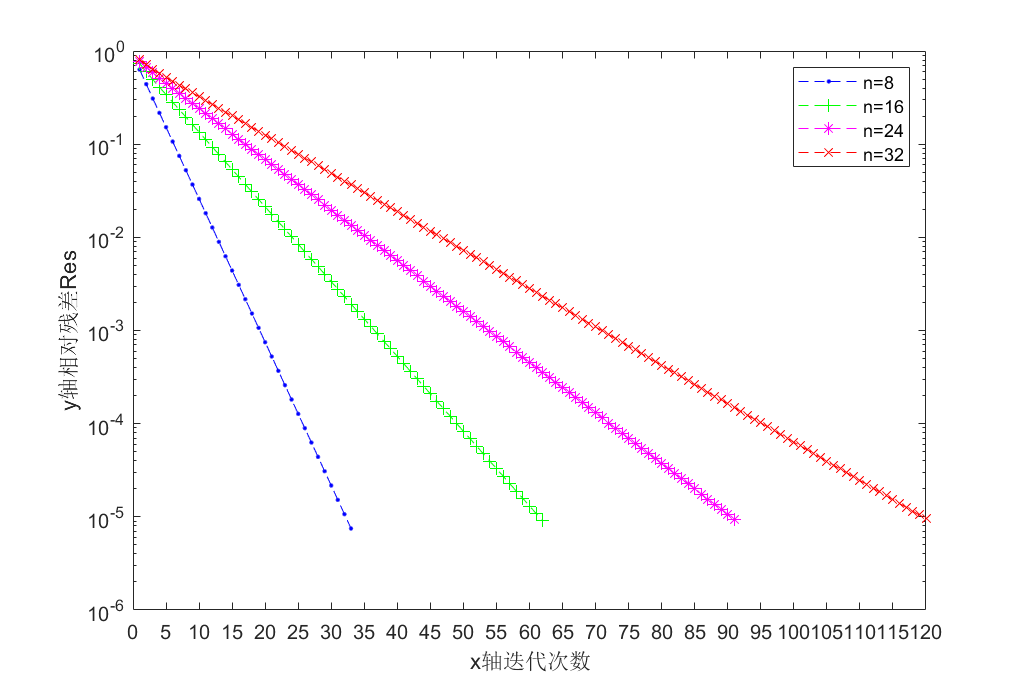}\\
  \caption{Residual norm for Example 4.2.1\label{fig7}}
\end{figure}
Next, we use the Newton-GADI method to compute this example and present the numerical results in Table \ref{tab5}. From the results, it can be observed that with the increase in matrix order, the number of iterations and computation time also increase. Therefore, this method is suitable for computing small-sized matrices, and for large-sized matrices, the computation speed becomes very slow. Fig. \ref{fig7} displays the iteration steps and residual norms for matrix orders of $n=8,16,24$. We can see that as the matrix order increases, the convergence rate of this algorithm decreases.
\section{Conclusions}
\label{sec:Conclusions}
\qquad This article proposes the GADI iteration method for solving large sparse complex symmetric linear systems. The method is applied to solve Lyapunov and Riccati equations with complex coefficients, utilizing the properties of the flattening operator and Kronecker product. Additionally, we analyze the convergence of this method and demonstrate its effectiveness through corresponding numerical results. However, this method also has certain limitations as it involves solving two sublinear systems during the iteration process, with the second sublinear system being a complex linear system that requires complex algorithms.  Additionally, the selection of parameters can also be further discussed.




\end{document}